\newtheorem{thm}{Theorem}[section]
\newtheorem*{thm*}{Theorem}
\newtheorem{cor}[thm]{Corollary}
\newtheorem{prop}[thm]{Proposition}
\theoremstyle{definition}
\theoremstyle{remark}
\numberwithin{equation}{section}
\newcommand{\s}{\textit}
\def\id#1{\def\@id{#1}}
\def\department#1{\def\@department{#1}}
\def\@maketitle{
\begin{center}
{\LARGE\bf \@title \par}
\vspace{10pt}
{\large \@department \par} 
{\@author \par}
\vspace{10pt}
\end{center}

\thispagestyle{empty}
}
\title{A virtualized skein relation for a multivariable polynomial invariant }
\author{Moemi Hiraki \footnote{e-mail : m.hiraki@nsc.nagoya-cu.ac.jp}}
\begin{document}
\maketitle
\thispagestyle{empty}

\begin{abstract}
The virtual skein relation for the Jones polynomial of the virtual link diagram was introduced by N. Kamada, S. Nakabo, and S. Satoh(\cite{skein}). H. A. Dye, L. H. Kauffman, and Y. Miyazawa introduced multivariable polynomial, an invariant of virtual links, which is a refinement of Jones polynomial(\cite{arrow},\cite{miyazawa}).
In this paper, we give a skein relation for the multivariable polynomials among positive, negative, and virtual crossings with some restrictions. We apply this relation to study some properties of virtual links obtained by replacing a real crossing by a virtual crossing.
\end{abstract}

\vspace{-20pt}

\section{Introduction}
In 1996, L. H. Kauffman introduced virtual knot theory as generalization of classical knot theory(\cite{kauffman}). In \cite{kauffman}, \s{f}-polynomial(Jones polynomial) was defined. It is an extension of Jones polynomial of classical links, denoted by $f_D(A) \in \mathbb{Z}[A^{\pm1}]$.
The following theorem holds in the category of virtual links as well as classical ones. 
\begin{thm}(L.H.Kauffman \cite{kauffman})\\
Let $(D_+,D_-,D_0)$ be a skein triple of oriented virtual link diagram. Then we have\\
\vspace{-10pt}
\begin{equation*}
A^4f_{D_+}(A)-A^{-4}f_{D_-}(A)+(A^2-A^{-2})f_{D_0}(A)=0
\end{equation*}
\end{thm}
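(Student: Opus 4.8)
The plan is to reduce the statement to the defining skein behavior of the Kauffman bracket $\langle\,\cdot\,\rangle$ and then rescale by the writhe normalization that turns the bracket into the $f$-polynomial. Recall that $f_D(A) = (-A^3)^{-w(D)}\langle D\rangle$, where $w(D)$ is the writhe (computed over the real crossings only), and that the bracket is expanded solely at real crossings while virtual crossings are left untouched. Since the distinguished crossing in a skein triple is a real crossing, every virtual crossing elsewhere in the three diagrams is identical and contributes nothing to the local computation, so the argument is formally the same as in the classical case.

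First I would expand the bracket at the distinguished crossing. Writing $D_\infty$ for the \emph{disoriented} smoothing (the one not compatible with the orientation), the bracket relation gives
\begin{align*}
\langle D_+\rangle &= A\,\langle D_0\rangle + A^{-1}\langle D_\infty\rangle,\\
\langle D_-\rangle &= A^{-1}\langle D_0\rangle + A\,\langle D_\infty\rangle,
\end{align*}
the essential point being that the $A$-smoothing of a positive crossing and the $A^{-1}$-smoothing of a negative crossing both produce the \emph{same} oriented smoothing $D_0$, so the two expansions share the identical auxiliary diagram $D_\infty$. Next I would eliminate $D_\infty$: multiplying the first equation by $A$, the second by $A^{-1}$, and subtracting yields the purely bracket-level identity
\begin{equation*}
A\,\langle D_+\rangle - A^{-1}\langle D_-\rangle = (A^2 - A^{-2})\langle D_0\rangle.
\end{equation*}

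Finally I would convert brackets to $f$-polynomials using the writhe bookkeeping $w(D_\pm) = w(D_0)\pm 1$. Substituting $\langle D\rangle = (-A^3)^{w(D)} f_D$ and dividing through by the common factor $(-A^3)^{w(D_0)}$ produces the coefficient $A\cdot(-A^3) = -A^4$ on $f_{D_+}$ and the coefficient $A^{-1}\cdot(-A^3)^{-1} = -A^{-4}$ on $f_{D_-}$; clearing the overall sign gives exactly $A^4 f_{D_+} - A^{-4} f_{D_-} + (A^2 - A^{-2}) f_{D_0} = 0$. I expect the only delicate point to be the convention bookkeeping in the first step—verifying that the oriented smoothing is indeed the $A$-smoothing at a positive crossing (and the $A^{-1}$-smoothing at a negative crossing)—since this is precisely what guarantees that the two expansions involve a common $D_\infty$. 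Once this is pinned down, the elimination and the writhe rescaling are entirely formal.
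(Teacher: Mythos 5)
Your argument is correct. The paper itself states this theorem as a cited result of Kauffman and gives no proof, so there is nothing internal to compare against; your derivation---expanding the bracket at the distinguished classical crossing, using that the oriented smoothing is the $A$-smoothing of $D_+$ and the $A^{-1}$-smoothing of $D_-$ to eliminate the common disoriented state, and then rescaling by $(-A^3)^{-w}$ with $w(D_\pm)=w(D_0)\pm 1$---is the standard proof, and it carries over verbatim to the virtual setting because the bracket is expanded only at classical crossings. The coefficient bookkeeping ($A\cdot(-A^3)=-A^4$, $A^{-1}\cdot(-A^3)^{-1}=-A^{-4}$) checks out and yields exactly the stated identity.
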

Here a skein triple $(D_+,D_-,D_0)$ means a triple of virtual link diagrams such that $D_-$ is obtained from $D_+$ by crossing change at a positive crossing point $p$ and $D_0$ is obtained from $D_+$ by smoothing $p$ as usual. 
A virtual skein triple is a triple $(D_+,D_-,D_v)$ of virtual link diagrams such that $D_-$ is obtained from $D_+$ by crossing change at a positive crossing point $p$ and $D_v$ is obtained from $D_+$ by replacing $p$ with a virtual crossing.
\begin{thm}(N. Kamada, S. Nakabo, S. Satoh \cite{skein})\\
Let $(D_+,D_-,D_v)$ be a virtual skein triple such that $D_+$ is a checkerboard colorable virtual link diagram, Then we have\\
\vspace{-10pt}
\begin{equation*}
A^3f_{D_+}(A)+A^{-3}f_{D_-}(A)=(A^3+A^{-3})f_{D_v}(A)
\end{equation*}
\end{thm}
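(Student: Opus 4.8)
The plan is to pass from the $f$-polynomial to the Kauffman bracket $\langle\cdot\rangle$, reduce the claimed identity to a purely local relation at the distinguished crossing $p$ by a state-sum computation, and then invoke checkerboard colorability at the very end to discard the single local configuration in which that relation would fail.

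First I would translate the statement into the bracket via $f_D(A)=(-A^3)^{-w(D)}\langle D\rangle$. Let $w_0$ be the writhe contributed by all real crossings other than $p$. Then $w(D_+)=w_0+1$ and $w(D_-)=w_0-1$, while $w(D_v)=w_0$, since a virtual crossing carries no sign. Substituting these and cancelling the common factor $(-A^3)^{-w_0}$, the asserted relation is equivalent to the bracket identity
\begin{equation*}
\langle D_+\rangle+\langle D_-\rangle=-(A^3+A^{-3})\,\langle D_v\rangle .
\end{equation*}
Next I would expand both sides by resolving every real crossing except $p$ and grouping the terms according to the resulting partial state $s$ on the complement of $p$; each $s$ carries a common weight $A^{a(s)-b(s)}$ (the difference of the numbers of $A$- and $B$-smoothings among the other crossings) and a common loop factor from the loops not meeting $p$. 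At $p$, the diagrams $D_+$ and $D_-$ contribute $A,A^{-1}$ (respectively $A^{-1},A$) times the two flat smoothings $D_0^s,D_\infty^s$, whereas $D_v$ keeps $p$ virtual and contributes no weight. Since $\langle D_+\rangle+\langle D_-\rangle$ is symmetric in the two smoothings, its $p$-local part is $(A+A^{-1})\bigl(d^{|D_0^s|}+d^{|D_\infty^s|}\bigr)$ with $d=-A^2-A^{-2}$. Using the elementary identity $-(A^3+A^{-3})=(A+A^{-1})(d+1)$, the whole statement reduces to the per-state loop-count identity
\begin{equation*}
d^{\,|D_0^s|}+d^{\,|D_\infty^s|}=(d+1)\,d^{\,|D_v^s|}.
\end{equation*}

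I would then enumerate cases. Through $s$ the four arc-ends $a,b,c,d$ of $p$ are joined in pairs, giving three external pairings. The two flat smoothings join these ends as $\{a,b\},\{c,d\}$ and as $\{b,c\},\{d,a\}$, while the virtual crossing joins them diagonally, $\{a,c\},\{b,d\}$. A direct count of loops then shows that for the two non-diagonal external pairings the two smoothing loop-numbers are $n{+}1$ and $n{+}2$ and the virtual loop-number is $n{+}1$, so the displayed identity holds verbatim; for the diagonal external pairing joining $a$ to $c$ and $b$ to $d$ the two smoothing loop-numbers are both $n{+}1$ and the virtual one is $n{+}2$, and the identity fails. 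Hence everything reduces to showing that this diagonal pairing cannot occur when $D_+$ is checkerboard colorable.

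This last step is the main obstacle, and it is exactly where colorability is used. Fix a checkerboard coloring of $D_+$. Because the two regions opposite at a crossing share a color, each arc of either smoothing at every crossing separates a black region from a white one; consequently, in any state the black side of each strand is globally well defined, and I may orient every strand so that black lies on its left. Relative to this orientation the strands leave $p$ at two of its ends and return at the other two, and a short inspection of the local colored picture at $p$ shows that the exits are precisely $a,c$ and the returns precisely $b,d$. An external strand leaves $p$ once and returns once, so it must join one of $\{a,c\}$ to one of $\{b,d\}$; the diagonal pairing would join $a$ to $c$ and $b$ to $d$, i.e.\ two exits and two returns, which is impossible. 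Therefore only the two good pairings occur, the per-state identity holds for every $s$, the bracket identity follows, and with it the theorem.
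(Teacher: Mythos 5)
Your proposal is correct. Note that the paper does not actually prove this statement---it is quoted from \cite{skein}---but its own main result (Theorem \ref{syu}) is the analogue for the multivariable polynomial and is proved by exactly the strategy you use: normalize away the writhe, expand the bracket state by state, classify the states by how the four ends of $p$ are joined externally, observe that the skein identity holds for the two ``adjacent'' pairings and fails only for the diagonal one, and invoke checkerboard colorability to rule the diagonal pairing out (in the paper this is the reduction to the configurations of Fig.~\ref{dproof}(a)--(f)). Your case analysis and the identity $-(A^3+A^{-3})=(A+A^{-1})(d+1)$ check out. The one place you go beyond the paper is the last step: the paper simply cites the proof of Theorem 6 and Figure 9 of \cite{cc} for the exclusion of the diagonal pairing, whereas you prove it via the ``orient every state loop with black on the left'' argument, which is indeed the content of Kamada's lemma. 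A small caution there: for a virtual diagram the complementary regions of the plane are not checkerboard colored, so ``black region/white region'' should be read as the coloring of one side of a neighborhood of each semi-arc (equivalently, the $\mathbb{Z}_2$ Alexander numbering used as the definition in this paper); with that reading your exit/entrance count at $p$ is exactly right and the argument is complete.
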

We give the definition of a checkerboard colorable virtual link diagram in Section \ref{pre}. In \cite{relation}, virtualized skein relations are given under different conditions. 
The multivariable polynomial invariant was defined by H. A. Dye, L. H. Kauffman (\cite{arrow}) and Y. Miyazawa (\cite{miyazawa}), which is a refinement of \s{f}-polynomial. 

A virtual link diagram is called almost classical if it admit an Alexander numbering. Where Alexander numbering is explained in Section2.  
An almost classical virtual link diagram is checkerboard colorable. 

In this paper, we discuss virtualized skein relation for the multivariable polynomial invariont of an almost classical virtual link. 

\section{Preparation}\label{pre}
In this section we recall the definitions of oriented cut points and cut systems, and multivariable polynomial invariants.

\subsection{Virtual links and their Alexander numberings}

A \s{virtual link diagram} is a collection of immersed oriented loops in $\mathbb{R}^2$ such that the multiple points are transverse double points and they are classified into classical crossings and virtual crossings: A \s{classical crossing} is an intersection with over/under information as in usual link diagrams, and a \s{virtual crossing} is an intersection without over/under information (\cite{kauffman}). A virtual crossing is depicted as a crossing encircled with a small circle. A classical crossing is also called a positive or negative as usual in knot  theory.

\s{Generalized Reidemeister moves} are local moves depicted in Fig.\ref{GR}: The 3 moves on top are \s{(classical) Reidemeister moves} and the 4 moves on the bottom are so-called \s{virtual Reidemeister moves}. Two link diagrams $D$ and $D'$ are said to be \s{equivarent} if they are related by a finite sequence of generalized Reidemeister moves and isotopic of $\mathbb{R}^2$.  A \s{virtual link} is an equivalence class of virtual link diagrams. 

\begin{figure}[htbp]
\centering
\includegraphics[scale=0.35]{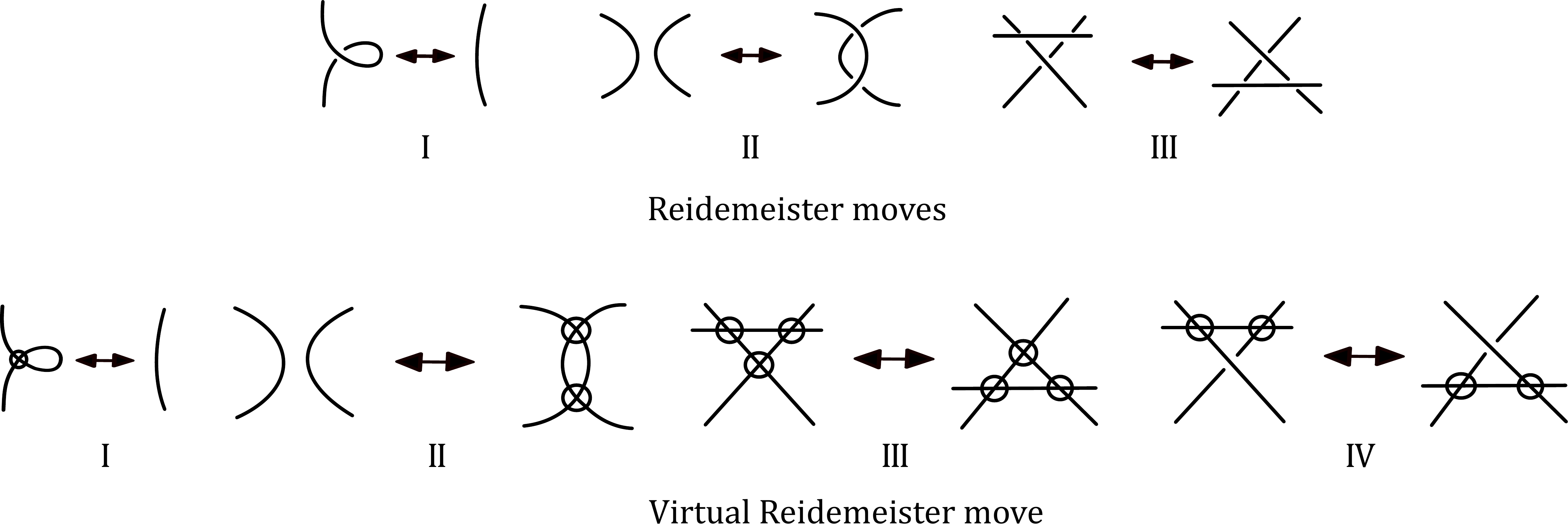}
\caption{Generalized Reidemeister move.} 
\label{GR}
\end{figure}

Let $D$ be a virtual link diagram. A \s{semi-arc} of $D$ is an immersed arc in a  component of $D$ between two classical crossings or an immersed loop missing classical crossings of $D$. An \s{Alexander numbering} of $D$ is an assignment of a number of $\mathbb{Z}$ to each semi-arc of $D$ such that for each classical crossing the numbers of 4 semi-arcs around it are as shown in Fig.\ref{Alexanderc} for some i $\in$ $\mathbb{Z}$.

\begin{figure}[htbp]
\centering
\includegraphics[scale=0.4]{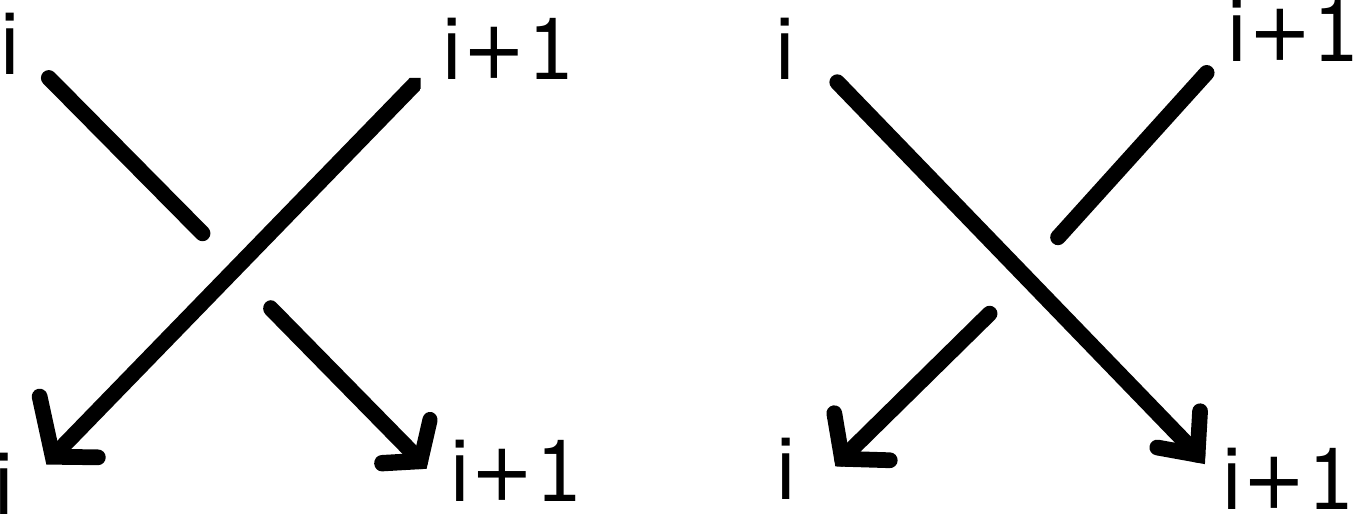}
\caption{Alexander numbering.} 
\label{Alexanderc}
\end{figure}

Note that a virtual crossing is an intersection of two semi-arcs, and the numbers assigned to semi-arcs are as in Fig.\ref{Alexanderv}

\begin{figure}[htbp]
\centering
\includegraphics[scale=0.4]{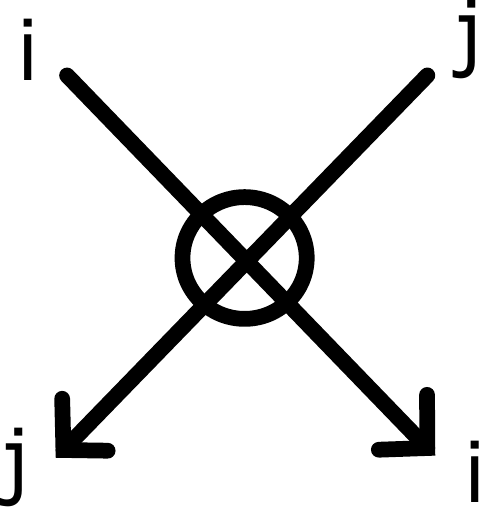}
\caption{Alexander numbering around a virtual crossing.} 
\label{Alexanderv}
\end{figure}

An example of a virtual link diagram with an Alexander numbering is depicted in Fig.\ref{trefoil}. A classical link diagram always admits an Alexander numbering. 

\begin{figure}[H]
\centering
\includegraphics[scale=0.4]{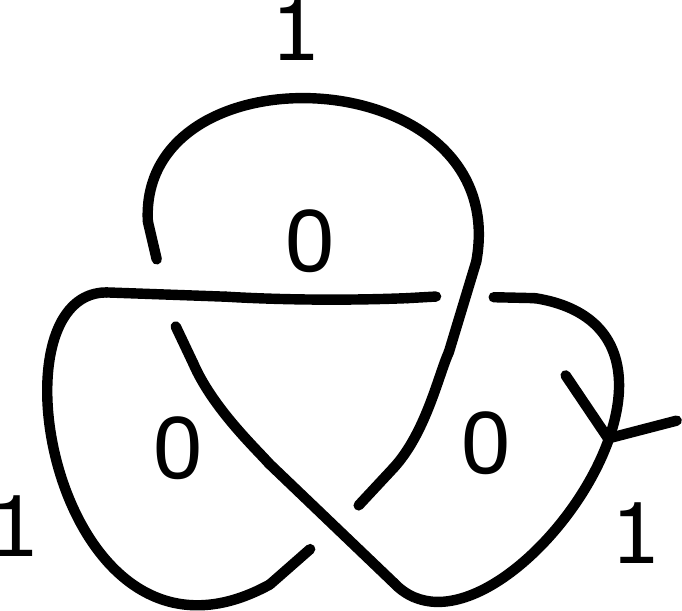}
\caption{Alexander numbering of trefoil.} 
\label{trefoil}
\end{figure}

Not every virtual link diagram admits an Alexander numbering. The virtual link diagram depicted in Fig.\ref{exalmost} does admit Alexander numbering, and the virtual link diagram in Fig.\ref{vtrefoil} does not.
 
\begin{figure}[H]
\centering
\includegraphics[scale=0.2]{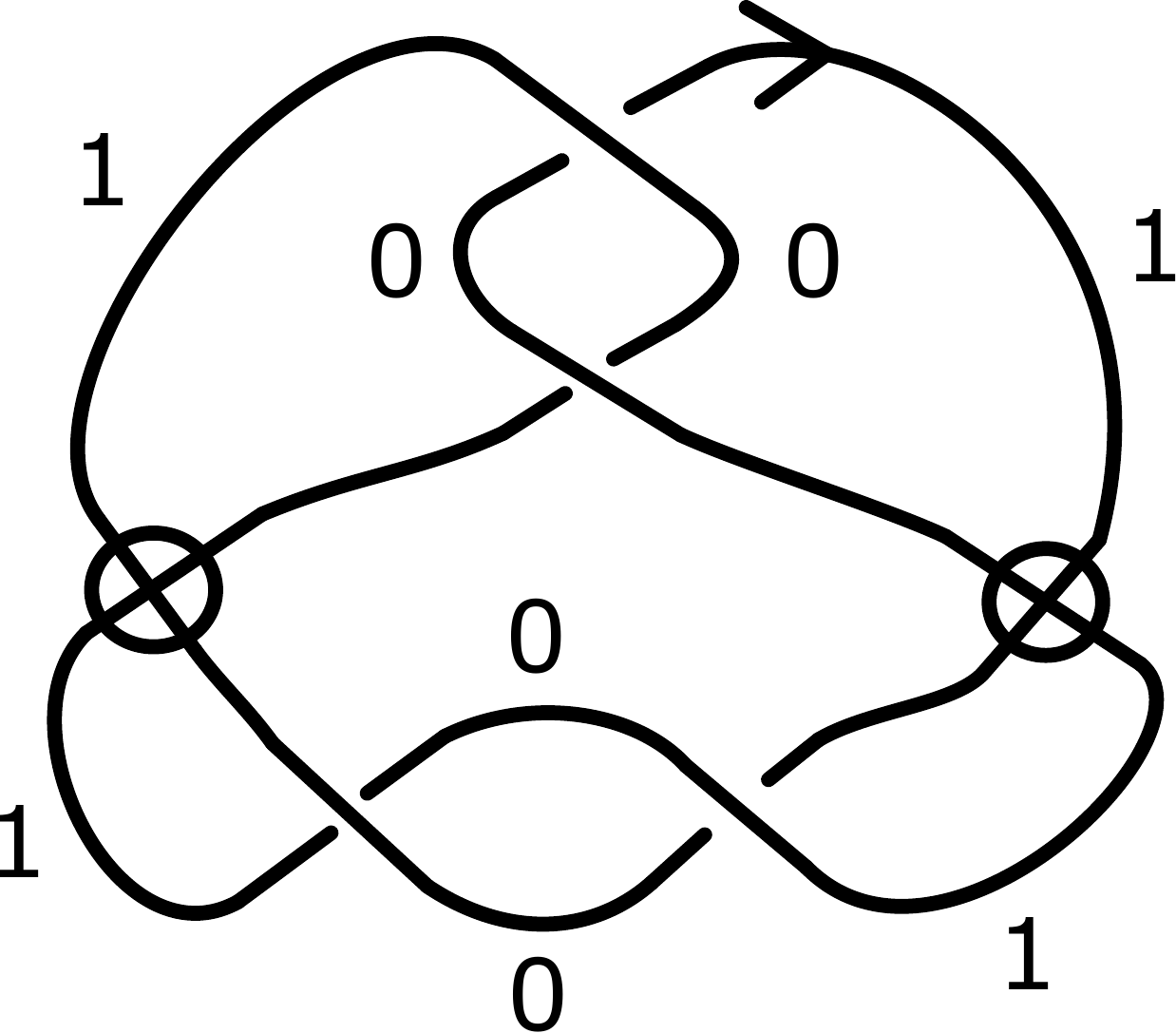}
\caption{Alexander numbering of virtual link diagram.} 
\label{exalmost}
\end{figure}

\begin{figure}[H]
\centering
\includegraphics[scale=0.4]{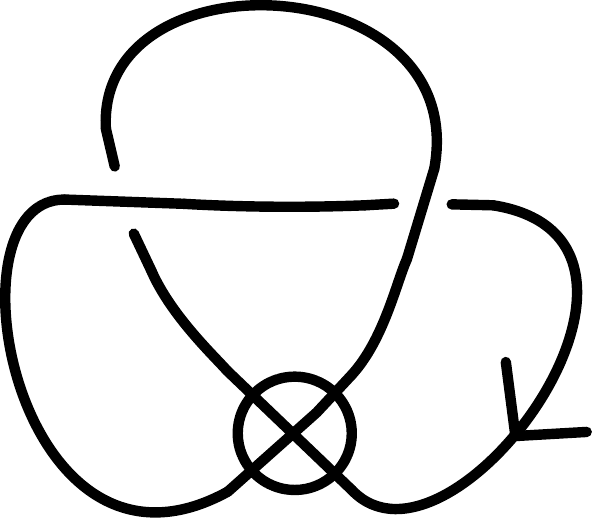}
\caption{Virtual link diagram which does not admit an Alexander numbering.} 
\label{vtrefoil}
\end{figure}

If a virtual link diagram admits an Alexander numbering in $\mathbb{Z}_2$ (=$\mathbb{Z}/2\mathbb{Z}$), it is said to be checkerboard colorable ($\mathbb{Z}_2$ almost classical). Note that the definition of a checkerboard colorable virtual link diagram is equivarent to that of \cite{cc}.
A virtual link diagram is \s{almost classical} if it admits an Alexander numbering. A virtual link $L$ is \s{almost classical} if there is an almost classical virtual link diagram of $L$. If a virtual link diagram is almost classical, it is checkerboard colorable.

\subsection{cut system}

Let $D$ be a virtual link diagram. \s{An oriented cut system} or simply a \s{cut system} of $D$ is a set of oriented cut points on semi-arc of $D$ as depicted in Fig.\ref{cutpoint} such that $D$ with it admits an Alexander numbering, where numbers are given as in Fig.\ref{all}. Such an Alexander numbering is called an \s{Alexander numbering of a virtual link diagram with a cut system}. See Fig.\ref{ACvtrefoil} for example of a virtual link diagram with a cut system.

\begin{figure}[H]
\centering
\includegraphics[scale=0.4]{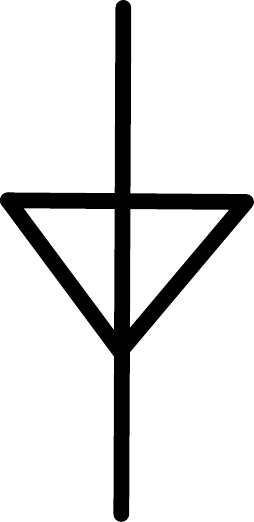}
\caption{An oriented cut point.} 
\label{cutpoint}
\end{figure}

\begin{figure}[H]
\centering
\includegraphics[scale=0.4]{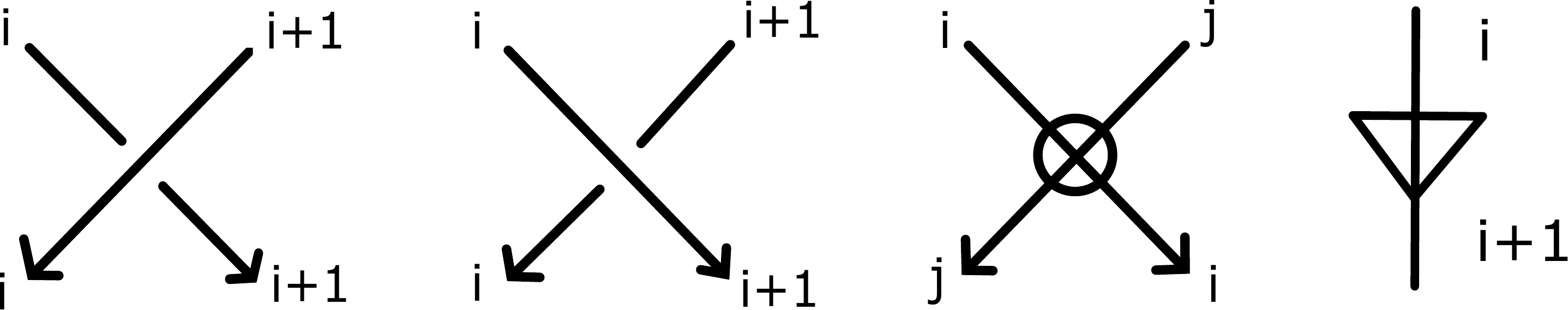}
\caption{Alexander numbering of a virtual link diagram with a cut system.} 
\label{all}
\end{figure}

\begin{figure}[H]
\centering
\includegraphics[scale=0.4]{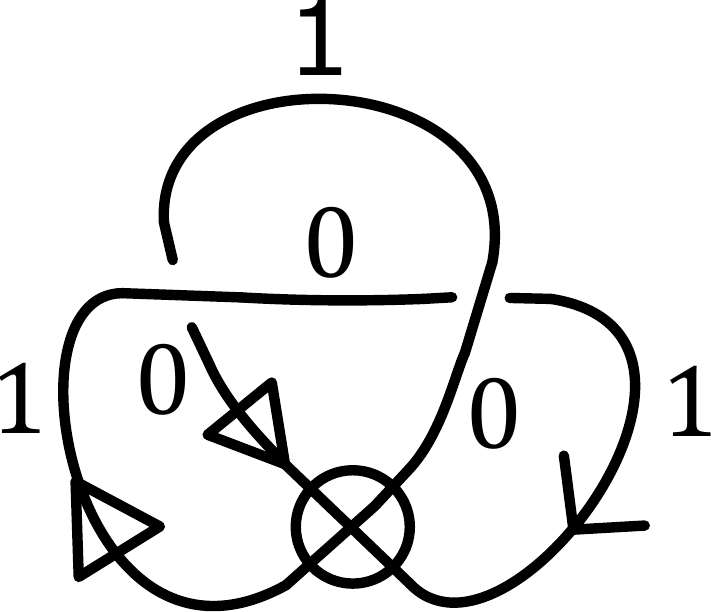}
\caption{Alexander numbering of a virtual link diagram with a cut system.} 
\label{ACvtrefoil}
\end{figure}

For a virtual link diagram $D$, its cut system is not unique. See an example of two cut systems of a virtual link diagram in Fig.\ref{8}.
\begin{figure}[H]
\centering
\includegraphics[scale=1.0]{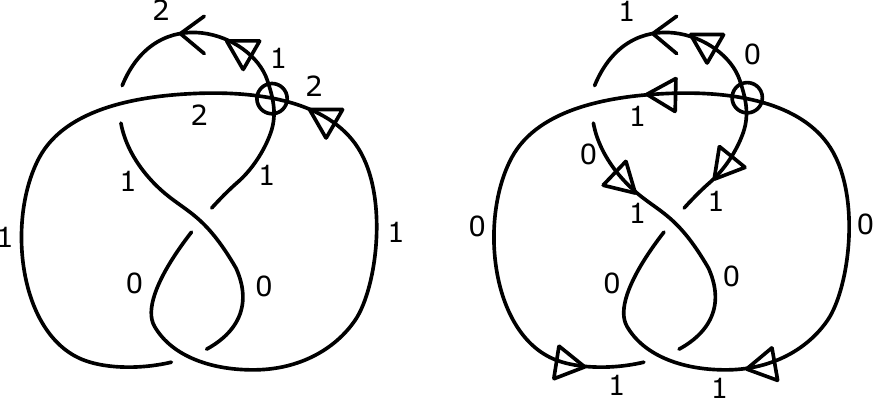}
\caption{Example of two cut systems of a virtual link diagram.} 
\label{8}
\end{figure}
The local transformations of oriented cut points depicted in Fig.\ref{cpmove} are called \s{oriented cut point moves}.  

\begin{figure}[htbp]
 \begin{minipage}{0.3\hsize}
   \centering
    \includegraphics[scale=0.28]{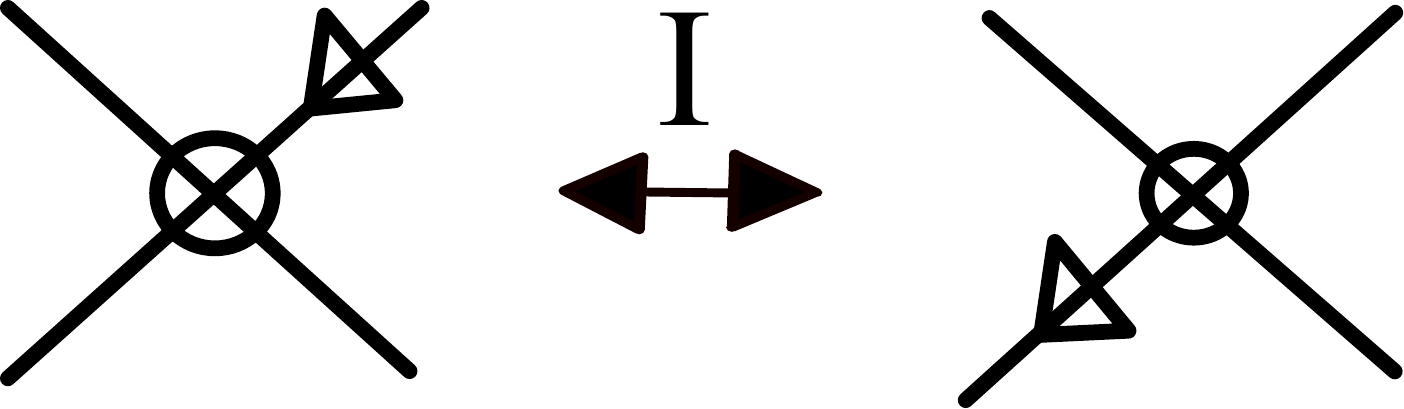}
 \end{minipage}
 \begin{minipage}{0.33\hsize}
   \centering
    \includegraphics[scale=0.28]{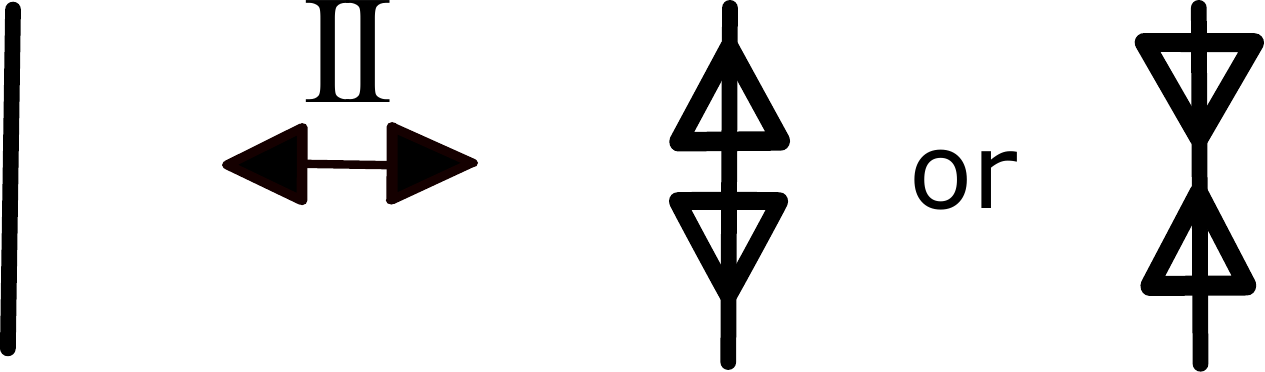}
 \end{minipage}
 \begin{minipage}{0.33\hsize}
   \centering
    \includegraphics[scale=0.28]{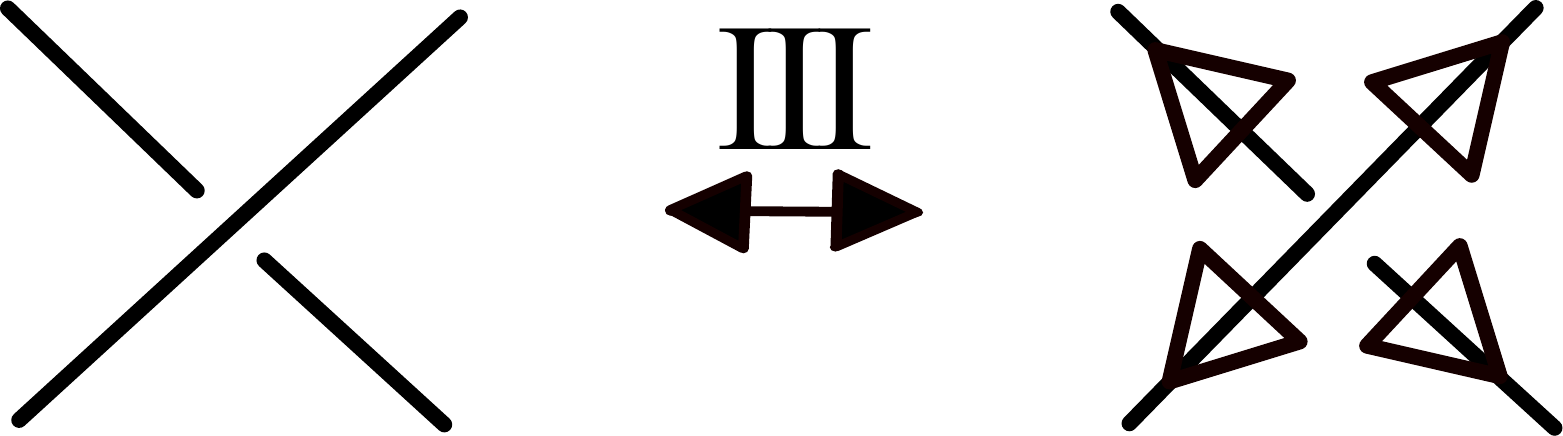}
 \end{minipage}
\caption{oriented cut point move.} 
\label{cpmove}
\end{figure} 

\begin{thm} (N. Kamada \cite{cp})\\
Two cut systems of a virtual limk diagram are related by a finite sequence of oriented cut point moves.
\end{thm}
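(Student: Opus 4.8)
The plan is to recast a cut system as a cohomology class on a graph built from $D$, to observe that this class is forced to be the one intrinsic to $D$, and to show that the three cut point moves generate exactly the relations that witness this. First I would recall the standard reformulation of an Alexander numbering: it is equivalent to a labeling $\rho$ of the regions of $\mathbb{R}^2\setminus D$ by integers such that crossing any oriented arc from its right to its left raises the label by $1$; the number on a semi-arc is then read off from the label of the region on a fixed side, and the local rules of Fig.~\ref{Alexanderc} and Fig.~\ref{Alexanderv} are exactly the consistency of $\rho$ around a classical, resp.\ virtual, crossing. Let $\Gamma$ be the region-adjacency graph of $D$ (vertices the regions, edges the arcs), so that $\rho\in C^{0}(\Gamma;\mathbb{Z})$. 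The orientation of $D$ prescribes an increment $1$ across each arc, giving a fixed cochain $\alpha_{D}\in C^{1}(\Gamma;\mathbb{Z})$. By the defining Figures~\ref{cutpoint} and~\ref{all}, an oriented cut point suppresses (or reverses) this increment, so a cut system $C$ records a cochain $\beta_{C}\in C^{1}(\Gamma;\mathbb{Z})$, and the condition that $(D,C)$ admit an Alexander numbering is precisely that $\alpha_{D}-\beta_{C}=\delta\rho$ be a coboundary. In particular every valid cut system satisfies $[\beta_{C}]=[\alpha_{D}]$ in $H^{1}(\Gamma;\mathbb{Z})$, a class that depends only on $D$.

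Next I would check the effect of each move in Fig.~\ref{cpmove}. Sliding a cut point along an arc, and the cancellation move (removing an oppositely oriented adjacent pair), leave $\beta_{C}$ unchanged as a cochain. The virtual-crossing move and the classical-crossing move transport a cut point across a crossing, where the arc passes from one pair of flanking regions to another; each such move therefore alters $\beta_{C}$ by an elementary coboundary $\pm\delta_{f}$ at the region $f$ involved, and as $f$ ranges over all regions these generate all of $B^{1}(\Gamma;\mathbb{Z})$. Consequently the assignment $[C]\mapsto[\beta_{C}]\in H^{1}(\Gamma;\mathbb{Z})$ is well defined on move-equivalence classes, and by the previous paragraph it takes the single value $[\alpha_{D}]$ on all cut systems of $D$.

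The theorem thus reduces to injectivity of this assignment: if $\beta_{C_{1}}-\beta_{C_{2}}=\delta\rho$ is a coboundary, then $C_{1}$ and $C_{2}$ are move-equivalent. I would prove this constructively. Writing $\delta\rho=\sum_{f}\rho(f)\,\delta_{f}$, I apply the crossing moves realizing $\delta_{f}$ at each region $f$ a net of $\rho(f)$ times, turning $C_{1}$ into a system $C_{1}'$ with $\beta_{C_{1}'}=\beta_{C_{2}}$ as cochains, i.e.\ with the same signed cut count on every arc. It then remains to reconcile the actual placements: on a single arc, two finite collections of oriented cut points with equal signed total are related by sliding, by the virtual-crossing move (to pass the virtual crossings lying on that arc), and by creating or annihilating oppositely oriented pairs — a one-dimensional statement. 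Performing this on every arc gives $C_{1}'\sim C_{2}$, whence $C_{1}\sim C_{2}$, and since $[\beta_{C_{1}}]=[\beta_{C_{2}}]=[\alpha_{D}]$ always holds, the theorem follows.

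The main obstacle is the honest, rather than merely cochain-level, realization of the coboundary step $\beta_{C_{1}}\to\beta_{C_{1}'}$. Applying a crossing move at a region $f$ requires a cut point to be physically present on an incident arc to be pushed across; when none is available I must first run the cancellation move backwards to insert a canceling pair, push one copy through the crossing, and later absorb the surplus by another cancellation after transporting it with the virtual-crossing move. Guaranteeing that this can always be arranged, that the cut-point orientations match so the cancellation move applies, and that the procedure terminates, is the delicate part. I would control it with a well-founded complexity measure — for instance the lexicographic pair consisting of the number of regions at which $\beta_{C_{1}'}$ still disagrees with $\beta_{C_{2}}$, followed by the total number of cut points — and verify that each elementary step strictly decreases it.
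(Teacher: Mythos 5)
The paper states this theorem as a quoted result of N.~Kamada \cite{cp} and gives no proof of it, so there is nothing internal to compare your argument to; judged on its own terms, your proposal has a foundational gap. The claimed equivalence between Alexander numberings and labelings of the regions of $\mathbb{R}^2\setminus D$ (``crossing an arc from right to left raises the label by $1$'') is a fact about \emph{classical} diagrams and fails for virtual ones. The winding-number (index) function on regions always provides such a labeling, so under your dictionary every virtual link diagram would admit an Alexander numbering with the empty cut system --- contradicting the diagram of Fig.~\ref{vtrefoil}. The point you miss is that the rule of Fig.~\ref{Alexanderv} (the number of a semi-arc is \emph{unchanged} as it passes through a virtual crossing) is not the consistency condition of a region labeling at a transverse double point: the number induced by a region labeling jumps by $\pm1$ each time the strand crosses the other strand. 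This discrepancy at virtual crossings is exactly where the nontriviality of cut systems lives, and your cochain model erases it: $[\alpha_D]$ is always $0$ in $H^1(\Gamma;\mathbb{Z})$, your invariant $[\beta_C]$ carries no information, and the characterization ``$C$ is a cut system iff $\alpha_D-\beta_C$ is a coboundary'' is false, so the whole reduction collapses.

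Two further problems would remain even after repairing the model (say, by working with the $4$-valent graph whose edges are the segments between classical crossings, virtual crossings and cut points, where the numbering genuinely lives). First, a single move of Fig.~\ref{cpmove} changes the cut data only on the few arcs meeting one crossing, which is in general not an elementary coboundary $\delta_f$ supported on \emph{all} arcs bounding a region $f$; so the claim that the moves generate $B^1(\Gamma;\mathbb{Z})$ is unjustified. Second, the step you yourself flag as ``the delicate part'' --- realizing a cochain-level equality by honest moves, arranging the cut-point orientations so that cancellation applies, and proving termination --- is precisely the content of the theorem, not a technicality to be deferred. For reference, the proof in \cite{cp} proceeds quite differently: it associates to $D$ a canonical cut system (built from the indices of the arcs) and shows by explicit local manipulations that every cut system is related to the canonical one by the three moves, i.e.\ it is a normal-form argument rather than a well-definedness-plus-injectivity argument in cohomology.
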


Note that for an almost classical virtual link diagram $D$, the empty set is a cut system of $D$.

\subsection{A multivariable polynomial invariant}

A local replacement at a classical crossing of a virtual link diagram depicted in left of Fig.\ref{splice} (or right of Fig.\ref{splice}) is called A-\s{splice} (or B-\s{splice}). 

\begin{figure}[H]
\centering
\includegraphics[scale=0.4]{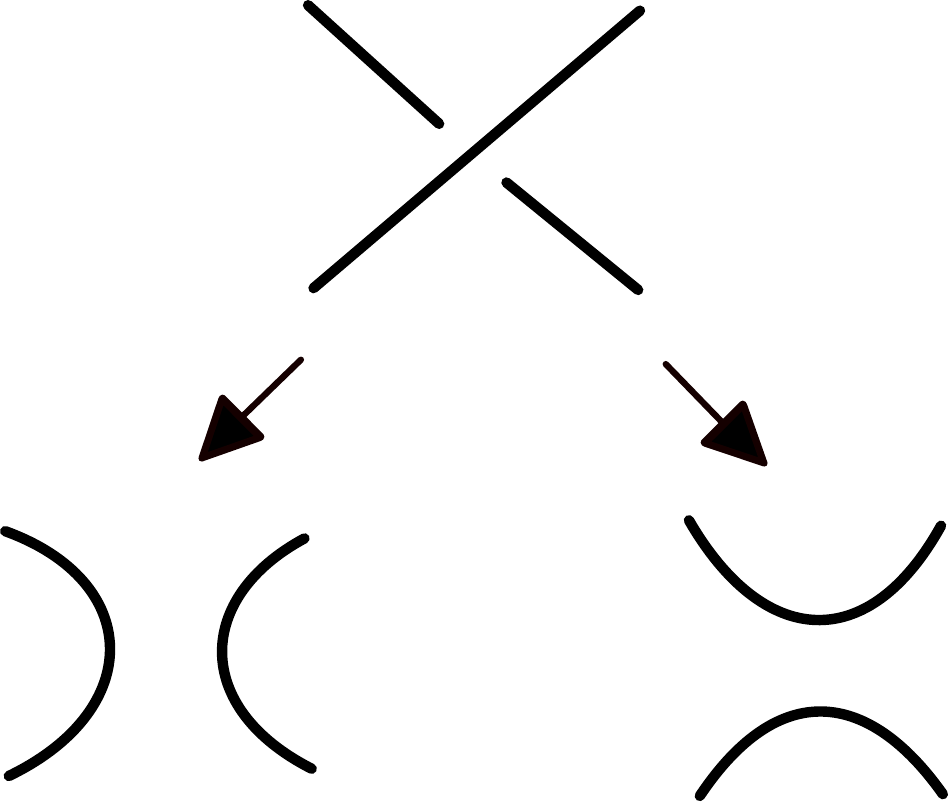}
\caption{splice.} 
\label{splice}
\end{figure}

Let ($D,C$) be a pair of a virtual link diagram $D$ and a cut system $C$. A \s{cut point state} (or \s{state}) of ($D,C$) is denoted as $S^c$ is a union of immersed loops in $\mathbb{R}^2$ with virtual crossings and cut points, which is obtained by splicing all classical crossings of $D$. 

We difine a map $\iota$ from the set of loops of cut point state diagrams to $\mathbb{Z}$ by the following conditions.

\begin{enumerate}
\item $\iota$ $\left( \begin{centering} \includegraphics[scale=0.2]{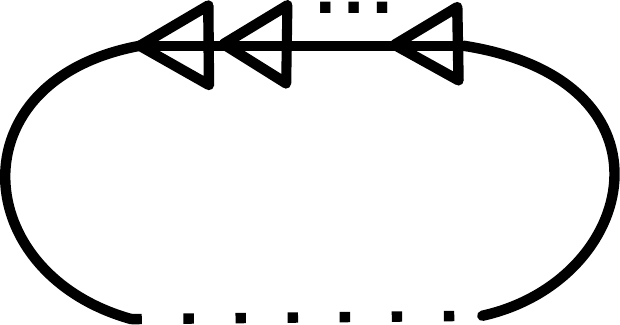} \end{centering} \right)$ = $\iota$ $\left( \begin{centering} \includegraphics[scale=0.2]{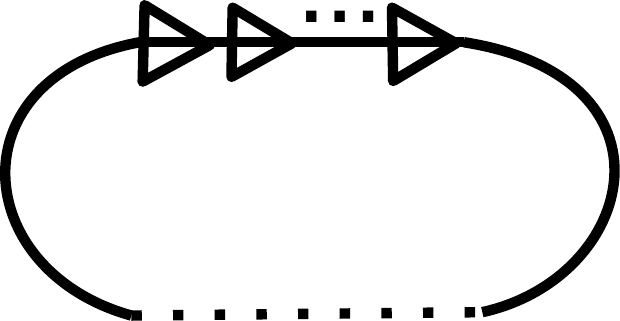} \end{centering} \right)$ = r , where 2r oriented cut points in the same direction appear.\\
\item $\iota$ $\left( \begin{centering} \includegraphics[scale=0.25]{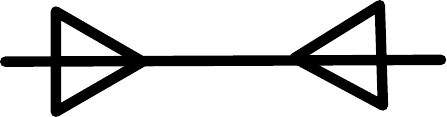} \end{centering} \right)$ = $\iota$ $\left( \begin{centering} \includegraphics[scale=0.25]{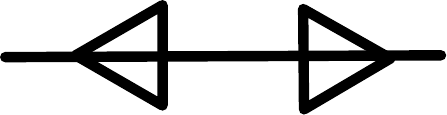} \end{centering} \right)$ = $\iota$ $\left( \begin{centering} \includegraphics[scale=0.25]{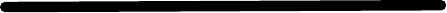} \end{centering} \right)$\\
\item $\iota$ $\left( \begin{centering} \includegraphics[scale=0.2]{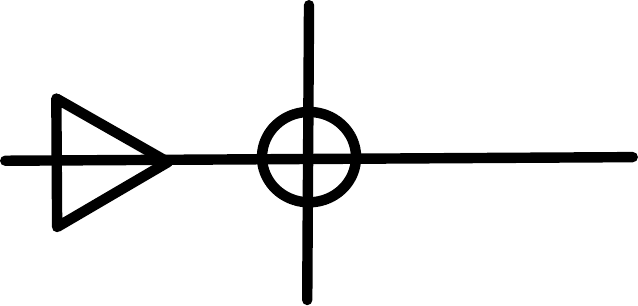} \end{centering} \right)$ = $\iota$ $\left( \begin{centering} \includegraphics[scale=0.2]{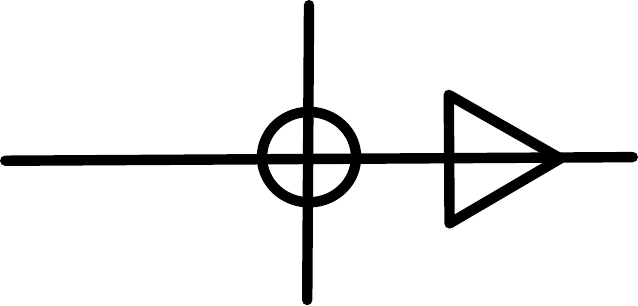} \end{centering} \right)$\\
\end{enumerate}

For a cut point state $S^c$ of ($D,C$), we denote $\natural$$S^c$ the number of A-splices minus B-splices obtaining $S^c$, by $\sharp$$S^c$ the number of loops in $S^c$, and by $\tau_i(S^c)$ the number of loops of $S^c$ whose indices by $\iota$ are i. 

The \s{double bracket} of ($D,C$) is defined by
\begin{equation*}
\braket{\hspace{-2pt}\braket{D,C}\hspace{-2pt}} \coloneqq  {\displaystyle \sum_{\sigma^c \in S^c}} A^{\natural \sigma^c} (-A^2-A^{-2})^{\sharp \sigma^c-1} d_1^{\tau_1(\sigma^c)}d_2^{\tau_2(\sigma^c)}\cdots \hspace{5pt} \in  \mathbb{Z} [A^{\pm 1},d_1,d_2 \cdots ].
\end{equation*}

\begin{prop}\label{choice} (N.Kamada \cite{jones})\\
Let $D$ be a virtual link diagram and $C$ and $C'$ be cut system of $D$. Then $\braket{\hspace{-2pt}\braket{D,C}\hspace{-2pt}}$ is equal to $\braket{\hspace{-2pt}\braket{D,C'}\hspace{-2pt}}$.
\end{prop}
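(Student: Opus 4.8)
The plan is to reduce the statement to the invariance of the double bracket under a single oriented cut point move, and then to check each move against the defining relations of $\iota$. By the theorem of N.\,Kamada (\cite{cp}) recalled above, the two cut systems $C$ and $C'$ are connected by a finite sequence of oriented cut point moves. Hence, arguing by induction on the length of this sequence, it suffices to treat the case in which $C'$ is obtained from $C$ by a single move of one of the three types in Fig.\ref{cpmove}. In this case I would first set up a bijection between the cut point states of $(D,C)$ and those of $(D,C')$: since a cut point move changes only the cut system and leaves the underlying diagram $D$ together with all of its classical crossings untouched, making the same choice of A-splice or B-splice at every classical crossing gives a one-to-one correspondence $\sigma^c \leftrightarrow {\sigma'}^c$ between states.

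Under this correspondence the exponent $\natural$ is literally unchanged, because $\sigma^c$ and ${\sigma'}^c$ are obtained by the same splices, and the loop count $\sharp$ is unchanged as well, since the loops of a state are determined by splicing $D$ while the cut points are merely marks carried along them. Thus the only factor of the weight $A^{\natural \sigma^c}(-A^2-A^{-2})^{\sharp \sigma^c-1}d_1^{\tau_1(\sigma^c)}d_2^{\tau_2(\sigma^c)}\cdots$ that can change is the monomial $d_1^{\tau_1}d_2^{\tau_2}\cdots$, and it remains only to prove that $\tau_i(\sigma^c)=\tau_i({\sigma'}^c)$ for every $i$; equivalently, that the move leaves the value of $\iota$ on every loop of the state unchanged.

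A cut point move is supported in a small disk and alters the cut points on at most one or two loops, the two states agreeing elsewhere. For the moves supported on semi-arcs away from classical crossings --- the cancellation of an adjacent oppositely oriented pair of cut points, and the transport of a cut point across a virtual crossing --- the same local picture appears on the spliced loops, and these are exactly the configurations equated by relations (iii) and (ii), so $\iota$ is preserved. For the remaining move, which I expect to interact with a classical crossing, I would splice that crossing in each of its two ways and check, using relations (i) and (ii), that the induced redistribution of cut points again preserves $\iota$.

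The main obstacle is this last case. Because that move is supported near a classical crossing while the double bracket is evaluated only after the crossing has been spliced, one must analyze the A-splice and the B-splice separately and track how the cut points are redistributed in each, including the subcases where the smoothing joins two loops or separates one. Verifying configuration by configuration, and with attention to the orientations of the cut points, that every resulting picture is equated by relations (i)--(iii) is the essential computation. Once it is complete, the bijection $\sigma^c \leftrightarrow {\sigma'}^c$ preserves the full weight $A^{\natural}(-A^2-A^{-2})^{\sharp-1}d_1^{\tau_1}d_2^{\tau_2}\cdots$ term by term, and summing over all states gives $\braket{\hspace{-2pt}\braket{D,C}\hspace{-2pt}}=\braket{\hspace{-2pt}\braket{D,C'}\hspace{-2pt}}$, as desired.
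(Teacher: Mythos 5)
The paper itself contains no proof of this proposition: it is quoted from N.~Kamada \cite{jones}, so there is no in-paper argument to compare against. Your strategy is nonetheless the natural one and surely the intended one: invoke Kamada's theorem from \cite{cp} that any two cut systems are connected by oriented cut point moves, reduce by induction to a single move, put the states of $(D,C)$ and $(D,C')$ in bijection by performing the same splice at every classical crossing, observe that $\natural$ and $\sharp$ are untouched, and reduce the whole claim to showing that each move preserves the multiset of $\iota$-values of the loops of each state. For the cancellation of an adjacent oppositely oriented pair and for the transport of a cut point across a virtual crossing, your identification of the local picture with the defining relations of $\iota$ is correct and essentially complete.

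The gap is that the one case you yourself label ``the essential computation'' --- the move supported at a classical crossing, which creates or deletes oriented cut points on the semi-arcs around that crossing --- is only described, never carried out. This is not a routine omission: it is the only point where the cut system interacts with the splicing, and it is precisely the case the relations defining $\iota$ were designed to absorb. To close it you must, separately for the A-splice and the B-splice at that crossing, determine onto which loop or loops of the state the new cut points fall, track their orientations relative to the orientation of each loop, and verify via the relations that every loop's $\iota$-value (hence every $\tau_i$) is unchanged; the subcases in which the splice joins the four local arcs into one loop rather than two genuinely differ, and the orientation bookkeeping is what makes the cancellations work. As written, the proposal is a correct and well-organized proof outline whose central verification is left as a placeholder, so it does not yet constitute a proof.
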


We define $\braket{\hspace{-2pt}\braket{D}\hspace{-2pt}}$ by $\braket{\hspace{-2pt}\braket{D,C}\hspace{-2pt}}$ for a cut system $C$. Let a virtual link diagram $D$. The \s{writhe} of $D$ is denoted by $w(D)$, which is the number of positive crossing of $D$ minus the number of negative crossing of $D$. 

A \s{multivariable polynomial} of $D$ is defined by
\begin{equation*}
X_{D} \coloneqq (-A^3)^{-w(D)}\braket{\hspace{-2pt}\braket{D}\hspace{-2pt}}.
\end{equation*}

A multitvariable polynomial invariant for virtual links is defined by H. A. Dye, L. H. Kauffman (\cite{arrow}) and Y. Miyazawa (\cite{miyazawa}) independently.

\begin{thm} (N. Kamada \cite{jones})\\
Let $D$ be a virtual link diagram. The multivariable polynomial $X_D$ coincides with the multivariable polynomial invariant for virtual links  is defined by H. A. Dye, L. H. Kauffman, Y. Miyazawa.
\end{thm}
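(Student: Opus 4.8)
The plan is to construct a weight-preserving bijection between the cut-point states summed in $\langle\langle D \rangle\rangle$ and the states summed in the Dye--Kauffman arrow polynomial (equivalently, in Miyazawa's polynomial), and then to check that the two writhe normalizations agree. First I would recall from \cite{arrow} and \cite{miyazawa} the state-sum definition of the target invariant: each classical crossing is resolved by one of two local smoothings, the oriented one (respecting the orientations) and the disoriented one (which introduces a pair of cusps, or nodes, on the outgoing strands), weighted by $A^{\pm 1}$ exactly as in the Kauffman bracket. After every crossing is resolved one obtains a disjoint union of oriented loops decorated with cusps; each loop then contributes a variable recording its reduced cusp number, and each cusp-free loop contributes the standard factor $(-A^2-A^{-2})$.

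Second, I would set up the dictionary between the two pictures. An oriented cut point of Fig.\ref{cutpoint} plays exactly the role of a cusp (node): the A-splice and B-splice of Fig.\ref{splice} correspond to the oriented and disoriented smoothings, so that $A^{\natural \sigma^c}$ matches the product of crossing weights and $(-A^2-A^{-2})^{\sharp \sigma^c - 1}$ matches the loop factor. The crucial step is to identify the index $\iota$ with the cusp (arrow) number of a loop: a loop carrying $2r$ cut points all in the same direction has $\iota = r$ by relation $\ONE$, while relations $\TWO$ and $\THREE$ are precisely the moves that let one slide cut points along a loop and past virtual crossings and cancel adjacent oppositely-oriented pairs, which is exactly the reduction procedure defining the arrow number. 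Hence $d_i^{\tau_i(\sigma^c)}$ records the same data as the reduced loop variables of the arrow/Miyazawa invariant.

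Third, under this dictionary each summand of $\langle\langle D \rangle\rangle$ equals the corresponding summand of the unnormalized target bracket, so the two brackets agree; multiplying by the common normalization $(-A^3)^{-w(D)}$ then identifies $X_D$ with the Dye--Kauffman/Miyazawa invariant. For this last matching I would need to confirm that the normalizing factor used in \cite{arrow} and \cite{miyazawa} is literally $(-A^3)^{-w(D)}$ (after the change of variables relating their parameters to $A$), and that their reduced loop variables are indexed in the same way as the $d_i$.

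I expect the main obstacle to lie in Step two, specifically in verifying that $\iota$ is well defined and coincides with the arrow number: one must show that every loop of a cut-point state reduces, under the oriented cut point moves of Fig.\ref{cpmove}, to a canonical form with $2r$ cut points of a single orientation, that this $r$ is independent of the reduction path, and that the orientation convention (which cusp is counted as positive) is consistent with that of \cite{arrow} and \cite{miyazawa}. Proposition \ref{choice} already guarantees that $\langle\langle D \rangle\rangle$ does not depend on the choice of cut system $C$, so no separate invariance argument is required on the Kamada side; the entire burden is to match the local data loop by loop.
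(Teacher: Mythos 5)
First, a point of comparison that matters for this review: the paper does not prove this statement at all. It is imported verbatim from N. Kamada's article \cite{jones}, so there is no in-paper argument to measure your proposal against; it has to be judged on its own as a proof sketch of the cited result.

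Your overall plan --- a state-by-state comparison of the double bracket with the Dye--Kauffman/Miyazawa state sum, followed by a check that both sides carry the same normalization $(-A^3)^{-w(D)}$ --- has the right shape and is essentially the strategy one would expect in the cited reference. But as written it contains a genuine gap, which you have correctly located but not closed. The dictionary in your second step is not set up accurately: in the cut-system construction the cut points appearing on a state are \emph{not} created by the disoriented (orientation-reversing) smoothings; they are the cut points of the fixed cut system $C$ chosen on $D$ beforehand so that $(D,C)$ admits an Alexander numbering, and every state inherits the same set of them regardless of which splices were performed. In the arrow/Miyazawa polynomial, by contrast, the cusps are created locally, two at a time, at each disoriented smoothing. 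So there is no literal bijection of decorations, and the real content of the theorem is exactly the claim you defer: that, loop by loop, the index $\iota$ computed from the inherited cut points equals the reduced arrow number computed from the locally created cusps. Establishing this requires showing that each loop of a cut point state reduces, under the moves of Fig.\ref{cpmove} and the relations defining $\iota$, to a canonical form with $2r$ coherently oriented cut points, that this $r$ is independent of the reduction, and that it matches the arrow number --- for instance by using the Alexander numbering to control where cut points must sit relative to the orientation-reversing smoothings. Without that argument the proposal is a plan rather than a proof. Your appeal to Proposition \ref{choice} to dispense with the choice of $C$ is correct and does legitimately remove one invariance check from your burden.
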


\section{Main result and applications}

\subsection{Main result}

\begin{thm}\label{syu}
  Let $(D_+,D_-,D_v)$ be a virtual skein triple. If $D_+,D_-$ are almost classical virtual link diagrams, then we have
\begin{equation*}
  (A^6-d_1)X_{D_+}+(-A^{-6}+d_1)X_{D_-}=(A^6-A^{-6})X_{D_v}.
\end{equation*}
\end{thm}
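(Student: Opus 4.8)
The plan is to strip off the writhe, expand the double bracket at the distinguished crossing $p$, and reduce the whole statement to a single three-term identity among the double brackets of the two smoothings and the virtualization of $p$; the genuinely new point will be an oriented cut-point count that produces the variable $d_1$. First I would record the writhe changes: since $D_-$ arises from $D_+$ by a crossing change at a positive crossing, $w(D_-)=w(D_+)-2$, and since $D_v$ replaces $p$ by a virtual crossing, $w(D_v)=w(D_+)-1$. Writing $X_D=(-A^3)^{-w(D)}\langle\!\langle D\rangle\!\rangle$ and factoring out the common $(-A^3)^{-w(D_+)}$, the assertion becomes the bracket-level identity
\begin{equation*}
(A^6-d_1)\langle\!\langle D_+\rangle\!\rangle+(A^6d_1-1)\langle\!\langle D_-\rangle\!\rangle=(A^{-3}-A^9)\langle\!\langle D_v\rangle\!\rangle .
\end{equation*}
Let $D_0$ be the oriented (Seifert) smoothing of $p$ and $D_\infty$ the disoriented one. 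Because $D_+$ is almost classical I may take the empty cut system, so that $D_0$ is again almost classical while $D_\infty$ inherits one oriented cut point on each of the two arcs created at $p$. The Kauffman expansion of the double bracket at $p$ then reads
\begin{equation*}
\langle\!\langle D_+\rangle\!\rangle=A\langle\!\langle D_0\rangle\!\rangle+A^{-1}\langle\!\langle D_\infty\rangle\!\rangle,\qquad
\langle\!\langle D_-\rangle\!\rangle=A^{-1}\langle\!\langle D_0\rangle\!\rangle+A\langle\!\langle D_\infty\rangle\!\rangle ,
\end{equation*}
and substituting these into the displayed identity reduces it to a single relation among $\langle\!\langle D_0\rangle\!\rangle$, $\langle\!\langle D_\infty\rangle\!\rangle$ and $\langle\!\langle D_v\rangle\!\rangle$ whose specialization at $d_1=1$ is precisely the Kamada--Nakabo--Satoh relation recalled in the introduction.

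The heart of the proof is this three-term relation, which I would establish state by state. Fix a splicing $\rho$ of all classical crossings other than $p$; this datum is common to $D_0$, $D_\infty$ and $D_v$, and the three resulting cut-point states differ only in how the four arc-ends at $p$ are joined (the oriented pattern, the disoriented pattern, or the virtual ``through'' pattern). The Alexander numbering of $D_+$ controls two things at once. On the one hand it governs the loop counts, hence the powers of $(-A^2-A^{-2})$: in each state $D_v$ contributes a single loop through $p$, while $D_0$ and $D_\infty$ each contribute two loops in complementary cases. On the other hand it governs the $\iota$-indices of the loops meeting $p$: the two cut points created at $p$ — by the disoriented smoothing in $D_\infty$, and the two forced on the virtual strands in $D_v$ in order to restore a numbering — combine along the loop through $p$ into a net of two cut points in the same direction, because the Alexander number jumps by exactly one across the classical crossing $p$. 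By the first defining relation of $\iota$ this upgrades that loop to one of index $1$, i.e. it contributes a single factor $d_1$, and tracking these contributions through the cases produces exactly the $d_1$-dependent coefficients of the three-term relation. Throughout I would invoke Proposition~\ref{choice} to pass freely between cut systems and N.\,Kamada's cut-point move theorem to bring each state into a normal form in which the index can be read off.

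The main obstacle is this last cut-point bookkeeping: one must verify that the forced cut points always combine to raise the index by exactly one and never higher, so that only $d_1$ (and no $d_2,d_3,\dots$) can appear with these coefficients. This is exactly the step that uses the full hypothesis that $D_+$ and $D_-$ are \emph{almost classical} rather than merely checkerboard colorable, since it is the integral Alexander numbering (with its unit jump at $p$) that pins the local index, and it is the same numbering that rules out the virtual ``through'' pattern from occurring as the join induced by $\rho$. Once this local index computation is in place, the remaining steps are the routine writhe normalisation and the formal linear combination recorded above.
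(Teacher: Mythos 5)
Your overall strategy --- normalize away the writhe, reduce to a bracket-level identity, rule out the ``through'' connection pattern at $p$ via checkerboard colorability, and extract $d_1$ from the two forced cut points --- is the same as the paper's, and your bracket-level identity $(A^6-d_1)\langle\!\langle D_+\rangle\!\rangle+(A^6d_1-1)\langle\!\langle D_-\rangle\!\rangle=(A^{-3}-A^9)\langle\!\langle D_v\rangle\!\rangle$ is the correct target. But there are two genuine problems with the route you take through $D_0$ and $D_\infty$. First, $D_\infty$ is not an oriented virtual link diagram (the disoriented smoothing is incompatible with the orientation of $D_+$), so $\langle\!\langle D_\infty\rangle\!\rangle$ is not defined in this framework. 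The paper never forms the smoothings as standalone diagrams; instead it partitions the \emph{states} of $D_+$, $D_-$ and $D_v$ into two families $S'$, $S''$ according to whether the external connection of the four arc-ends at $p$ is the oriented or the disoriented pairing, and proves the four identities $\langle\!\langle D_\pm|S'_\pm\rangle\!\rangle=-A^{\pm3}\langle\!\langle D_v|S'_v\rangle\!\rangle$ and $d_1\langle\!\langle D_\pm|S''_\pm\rangle\!\rangle=-A^{\mp3}\langle\!\langle D_v|S''_v\rangle\!\rangle$, from which the theorem follows by a two-line linear combination.

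Second, and more seriously, your description of the index computation is wrong where it is specific. You assert that the two cut points ``combine along the loop through $p$ into a net of two cut points in the same direction,'' giving index $1$ and a factor $d_1$. This holds only in the $S''$ family (external pairing disoriented): there the loop of a $D_v$-state through $p_v$ carries the two cut points coherently and has index $1$. In the $S'$ family (external pairing oriented) the two cut points on that loop cancel and the index is $0$; and for your $D_\infty$ in the $S''$ family the two cut points land on two \emph{different} loops, so no loop of index $1$ is created there either. If the index were always $1$ as stated, $\langle\!\langle D_v\rangle\!\rangle$ would acquire a global factor of $d_1$ and your three-term relation would fail; the asymmetry (index $0$ on $S'$, index $1$ on $S''$) is exactly what produces the distinct coefficients of $X_{D_+}$ and $X_{D_-}$ in the theorem. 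Since you single out this bookkeeping as ``the main obstacle'' and then mis-state its outcome, the heart of the proof is missing. Two smaller corrections: excluding the ``through'' pattern needs only checkerboard colorability (a mod $2$ numbering), not the integral one; the real role of almost classicality is that it allows the \emph{empty} cut system on $D_\pm$, which is what guarantees that the only cut points in any state are the two at $p$ and hence that no $d_i$ with $i\ge 2$ can appear.
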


\begin{proof}

Let $p_+$ be a positive crossing of $D_+$ such that the corresponding crossing of $D_-$ (or $D_v$) is negative (or virtual) as in Fig.\ref{bunnkai}. The corresponding crossing of $D_-$ (or $D_+$) is denoted by $p_-$ (or $p_v$). We take empty sets as cut systems of $D_+$ and $D_-$. Then the virtual link diagram $D_v$ admits an Alexander numbering if two oriented cut points are given around the virtual crossing $p_v$ as in Fig.\ref{bunnkai}. We take such a cut system of $D_v$.

\begin{figure}[H]
\centering
\includegraphics[scale=0.5]{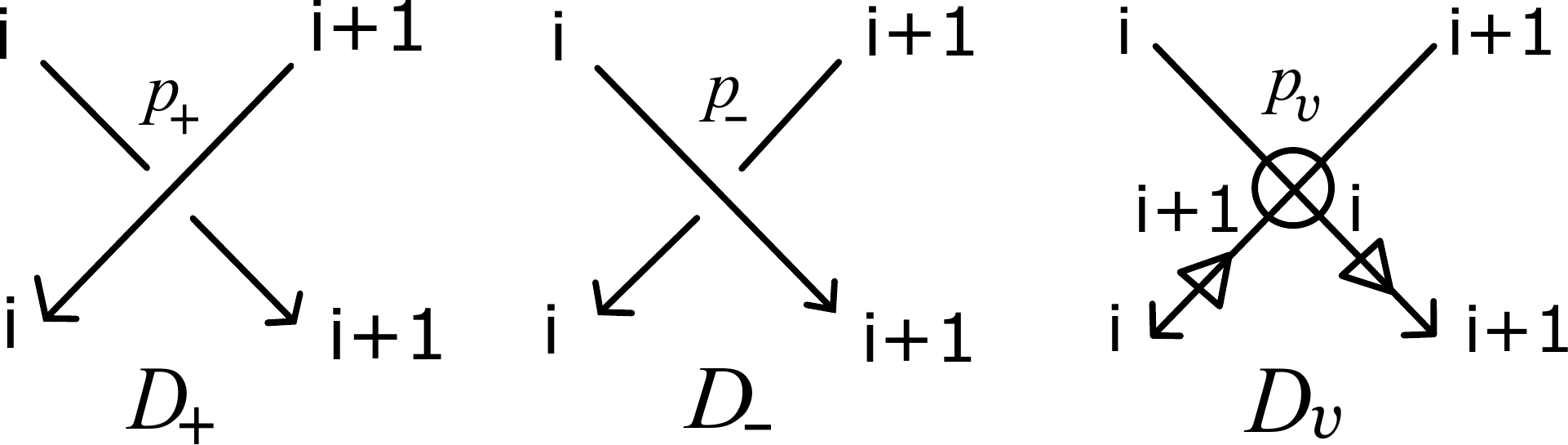}
\caption{Cut systems of virtual skein triple} 
\label{bunnkai}
\end{figure}

The loops of states of $D_+$ around a classical crossing $p_+$ are depicted as in Fig.\ref{dproof} (a) or (b) by noting that an almost classical virtual link diagram is checkerboard colorable. For the details, see the proof of Theorem 6 and Figure 9 in \cite{cc}. Those of $D_-$ (or $D_v$) around the corresponding classical crossing $p_-$ (or corresponding virtual crossing $p_v$) are depicted as in Fig.\ref{dproof} (c) or (d) (or Fig.\ref{dproof} (e) or (f)). The set of states of $D_+$ as depicted in Fig.\ref{dproof} (a) (or in Fig.\ref{dproof} (b)) is denoted by $S'_+$ (or $S''_+$). That of $D_-$ as depicted in Fig.\ref{dproof} (c)  (or in Fig.\ref{dproof} (d)) is denoted by $S'_-$ (or $S''_-$).  The set of states of $D_v$ as depicted in Fig. \ref{dproof}(e) (or Fig. \ref{dproof}(f)) is denoted by $S'_v$ (or $S''_v$).

\begin{figure}[htbp]
\centering
\includegraphics[scale=0.7]{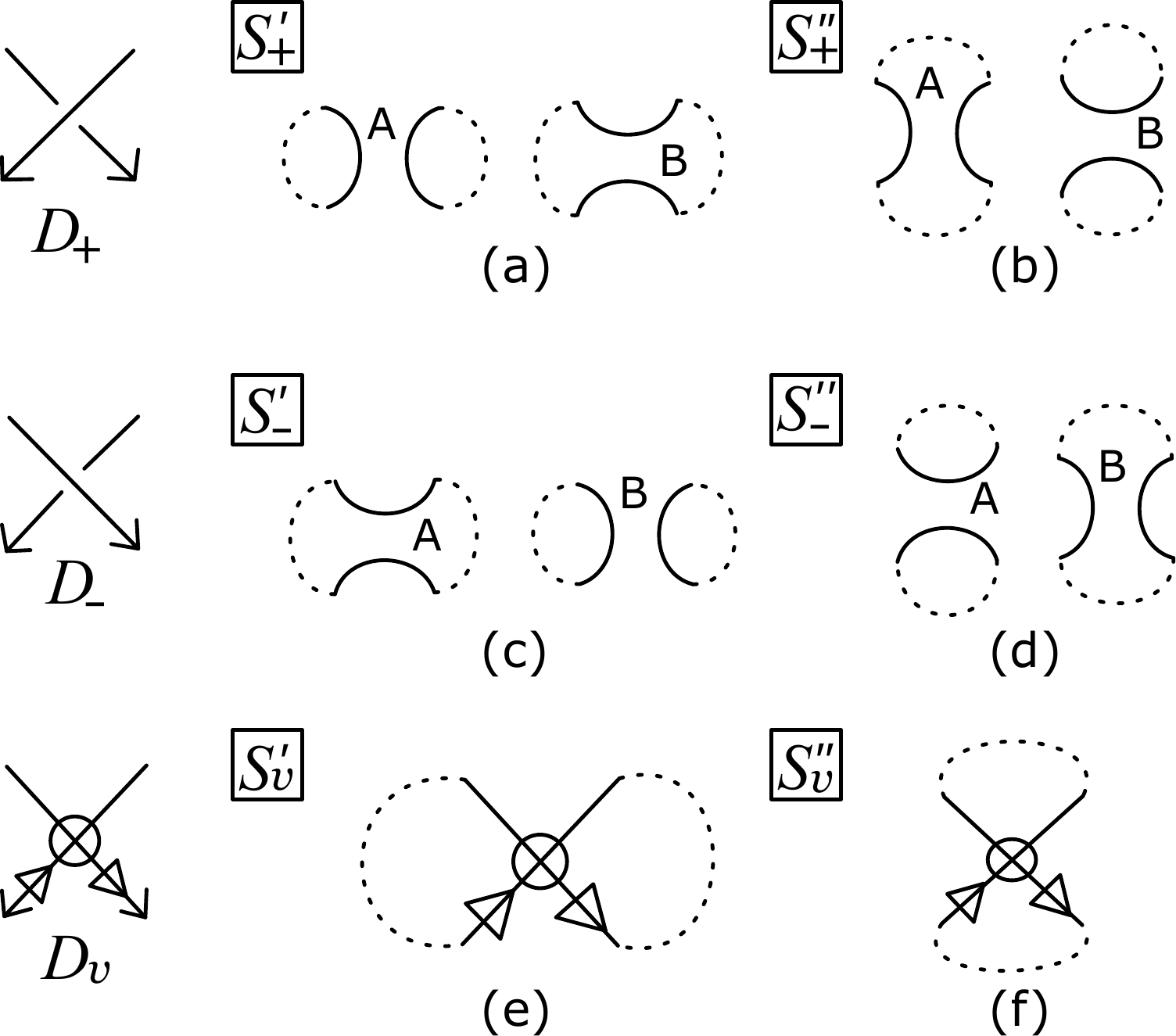}
\caption{Loops of states.}
\label{dproof}
\end{figure}

For a state $S_+$ of $S'_+$, let $S_v$ be the state of $S'_v$ corresponding to $S_+$. If $S_+$ is obtained from $D_+$ by applying A-splice (or B-splice) at the crossing $p_+$, $\sharp S_+$ = $\sharp S_v +1$ (or $\sharp S_+$ = $\sharp S_v$).

Then we have
\begin{align*}
\braket{\hspace{-2pt}\braket{D_+|S'_+}\hspace{-2pt}}
&= A(-A^2-A^{-2})\braket{\hspace{-2pt}\braket{D_v|S'_v}\hspace{-2pt}}+A^{-1}\braket{\hspace{-2pt}\braket{D_v|S'_v}\hspace{-2pt}} \notag \\
&= -A^3\braket{\hspace{-2pt}\braket{D_v|S'_v}\hspace{-2pt}}\\
\end{align*}
where, 
\begin{equation*}
\braket{\hspace{-2pt}\braket{D|S_+'}\hspace{-2pt}}  \coloneqq  {\displaystyle \sum_{\sigma^c \in S'_+}} A^{\natural \sigma^c} (-A^2-A^{-2})^{\sharp \sigma^c-1} d_1^{\tau_1(\sigma^c)}d_2^{\tau_2(\sigma^c)}\cdots \hspace{5pt} \in  \mathbb{Z} [A^{\pm 1},d_1,d_2 \cdots ].
\end{equation*}

We have the following in similar way.
\begin{align*}
\braket{\hspace{-2pt}\braket{D_-|S'_-}\hspace{-2pt}} 
&= -A^{-3}\braket{\hspace{-2pt}\braket{D_v|S'_v}\hspace{-2pt}}\\
d_1\braket{\hspace{-2pt}\braket{D_+|S''_+}\hspace{-2pt}}
&= (A+A^{-1}(-A^2-A^{-2}))\braket{\hspace{-2pt}\braket{D_v|S''_v}\hspace{-2pt}}\\
&= -A^{-3}\braket{\hspace{-2pt}\braket{D_v|S''_v}\hspace{-2pt}}\\ 
d_1\braket{\hspace{-2pt}\braket{D_-|S''_-}\hspace{-2pt}} 
&= -A^3\braket{\hspace{-2pt}\braket{D_v|S''_v}\hspace{-2pt}}.\\
\end{align*}

From the above
\begin{align*}
&(-A^3+A^{-3}d_1)\braket{\hspace{-2pt}\braket{D_+}\hspace{-2pt}}-(-A^{-3}+A^3d_1)\braket{\hspace{-2pt}\braket{D_-}\hspace{-2pt}}\\
&=(-A^3+A^{-3}d_1)(\braket{\hspace{-2pt}\braket{D_+|S'_+}\hspace{-2pt}}+\braket{\hspace{-2pt}\braket{D_+|S''_+}\hspace{-2pt}})-(-A^{-3}+A^3d_1)(\braket{\hspace{-2pt}\braket{D_-|S'_-}\hspace{-2pt}}+\braket{\hspace{-2pt}\braket{D_-|S''_-}\hspace{-2pt}})\\
&=(-A^3+A^{-3}d_1)(-A^3\braket{\hspace{-2pt}\braket{D_v|S'_v}\hspace{-2pt}}-A^{-3}d_1^{-1}\braket{\hspace{-2pt}\braket{D_v|S''_v}\hspace{-2pt}})-(-A^{-3}+A^3d_1)(-A^{-3}\braket{\hspace{-2pt}\braket{D_v|S'_v}\hspace{-2pt}}-A^3d_1^{-1}\braket{\hspace{-2pt}\braket{D_v|S''_v}\hspace{-2pt}})\\
&=(A^6-A^{-6})(\braket{\hspace{-2pt}\braket{D_v|S'_v}\hspace{-2pt}}+\braket{\hspace{-2pt}\braket{D_v|S''_v}\hspace{-2pt}})\\
&=(A^6-A^{-6})\braket{\hspace{-2pt}\braket{D_v}\hspace{-2pt}}. 
\end{align*}

Since $w(D_+)=w(D_v)+1$, $w(D_-)=w(D_v)-1$, it followd that
\begin{equation*}
(A^6-d_1)X_{D_+}+(-A^{-6}+d_1)X_{D_-}=(A^6-A^{-6})X_{D_v}. 
\end{equation*}
\end{proof}
\vspace{10pt}
The following is an example of Theorem \ref{syu}.
\begin{figure}[H]
\centering
\includegraphics[scale=0.7]{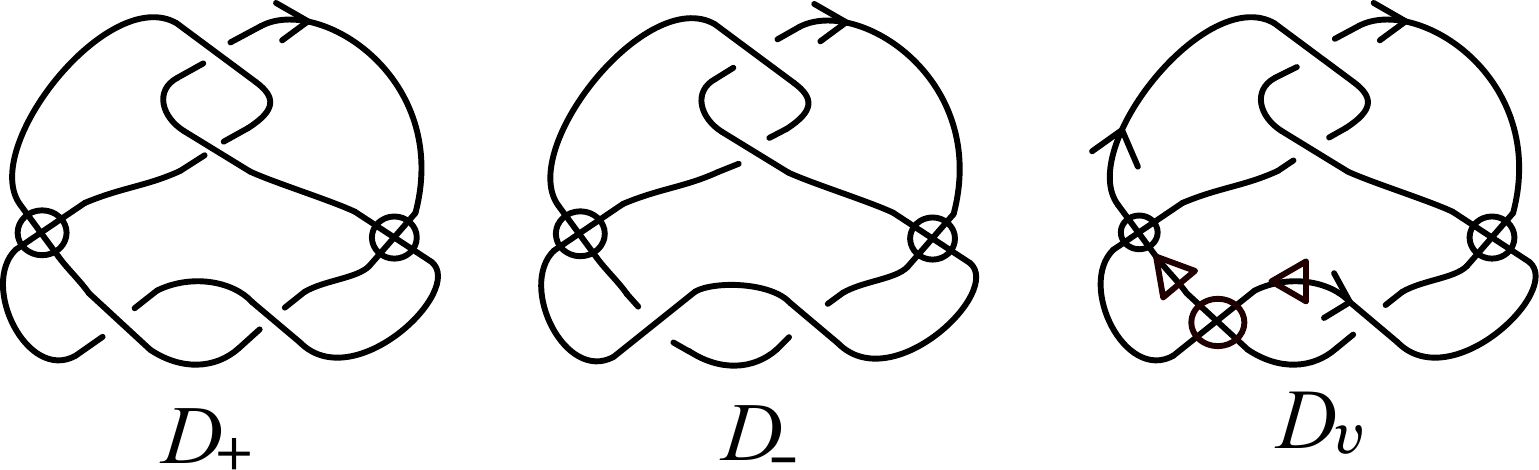}
\caption{Example.}
\label{thmex}
\end{figure}
The multivariable polynomials for each diagrams are as follows.
\begin{align*}
&X_{D_{+}}=A^8-A^4+1-A^{-4}+A^{-8}\\
&X_{D_{-}}=1 \\
&X_{D_{v}}=A^8-A^4+1+(-A^2+A^{-2})d_1\\
\end{align*}
So we have the following equations.
\begin{eqnarray*}
(A^6-d_1)X_{D_+}+(-A^{-6}+d_1)X_{D_-} &=& (A^6-d_1)(A^8-A^4+1-A^{-4}+A^{-8}) +(-A^{-6}+d_1)1 \\
              &=& A^{14}-A^{10}+A^6-A^2+A^{-2}-A^{-6}+(-A^8+A^4+A^{-4}-A^{-8})d_1  \\
(A^6-A^{-6})X_{D_v} &=& (A^6-A^{-6})(A^8-A^4+1+(-A^2+A^{-2}) d_1) \\
         &=& A^{14}-A^{10}+A^6-A^2+A^{-2}-A^{-6}+(-A^8+A^4+A^{-4}-A^{-8})d_1
\end{eqnarray*}

\subsection{Applications}

\begin{prop}\label{1} (T. Nakamura, Y. Nakanishi, S. Satoh, Y. Tomiyama \cite{almost},  N. Kamada \cite{jones})\\
Let $D$ be a virtual link diagram presenting an almost classical virtual link. Then, $X_D$ $\in  \mathbb{Z} [A^{\pm 1}]$. 
\end{prop}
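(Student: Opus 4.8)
The plan is to leverage the fact that $X_D$ is an invariant of the underlying virtual link, which reduces the statement to a one-line computation for a single conveniently chosen diagram. By the theorem identifying $X_D$ with the multivariable invariant of Dye--Kauffman and Miyazawa (together with Proposition~\ref{choice}, which removes any dependence on the cut system), $X_D$ depends only on the virtual link presented by $D$. Hence, if $D$ presents an almost classical virtual link $L$, I may replace $D$ by any almost classical diagram $D'$ of $L$ without changing the value, so that $X_D = X_{D'}$. It therefore suffices to prove $X_{D'} \in \mathbb{Z}[A^{\pm1}]$ for an almost classical diagram $D'$.

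For such a $D'$ the key structural input is the remark recorded just after Theorem~2.1: since an almost classical diagram already admits an Alexander numbering, the empty set is a cut system of $D'$. I would work with this cut system throughout. Every cut point state $\sigma^c$ of $(D',\varnothing)$ is obtained by splicing all classical crossings of $D'$, and splicing neither introduces cut points nor modifies the (empty) cut system. Consequently each loop of every state $\sigma^c$ carries no oriented cut points, so by condition~(i) defining $\iota$ (taken with $r=0$) its index is $0$. This forces $\tau_i(\sigma^c)=0$ for all $i\ge 1$ and all states $\sigma^c$.

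With all positive indices vanishing, every monomial $d_1^{\tau_1(\sigma^c)}d_2^{\tau_2(\sigma^c)}\cdots$ in the defining sum equals $1$, and the double bracket collapses to
\begin{equation*}
\braket{\hspace{-2pt}\braket{D'}\hspace{-2pt}} = \sum_{\sigma^c} A^{\natural \sigma^c}\,(-A^2-A^{-2})^{\sharp \sigma^c-1} \in \mathbb{Z}[A^{\pm1}].
\end{equation*}
Multiplying by the unit $(-A^3)^{-w(D')}$ stays inside $\mathbb{Z}[A^{\pm1}]$, giving $X_{D'}\in\mathbb{Z}[A^{\pm1}]$ and hence $X_D\in\mathbb{Z}[A^{\pm1}]$.

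The only genuinely delicate point is the gap between an almost classical \emph{diagram} and a diagram merely \emph{presenting} an almost classical link: a diagram of an almost classical link need not itself admit an Alexander numbering, and once a nonempty cut system is forced the variables $d_i$ can certainly occur in $\braket{\hspace{-2pt}\braket{D}\hspace{-2pt}}$. The entire argument rests on invariance to pass to a good diagram $D'$; without it one would instead have to control how the $d_i$-free property behaves directly under the generalized Reidemeister moves and the oriented cut point moves, which is the substantially harder route. Granted invariance, the remaining content is exactly the empty-cut-system observation above.
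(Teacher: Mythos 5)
This proposition is imported by the paper from \cite{almost} and \cite{jones} and is stated without proof, so there is no in-paper argument to compare against; your reconstruction is correct and matches the standard one. The two ingredients you use are exactly the right ones: invariance of $X_D$ as a virtual link invariant (Theorem~2.5 together with Proposition~\ref{choice}) lets you pass to an almost classical diagram $D'$, and for $D'$ the empty cut system yields states whose loops carry no cut points, hence have $\iota$-index $0$ by condition~(i) with $r=0$, so no $d_i$ survives in $\braket{\hspace{-2pt}\braket{D'}\hspace{-2pt}}$ and $X_{D'}\in\mathbb{Z}[A^{\pm1}]$. You are also right to flag that the invariance step carries all the weight in bridging ``diagram presenting an almost classical link'' and ``almost classical diagram''; note only that this step leans on the cited Theorem~2.5 (agreement with the Dye--Kauffman--Miyazawa invariant), which is itself a nontrivial external input rather than something established in this paper.
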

By Theorem \ref{syu} and Proposition \ref{1}, we have the following.

\begin{cor}\label{coro}
If a virtual link diagram $D$ is obtained from an almost classical virtual link diagram by replacing a classical with a virtual crossing. Then, 
$X_{D} \in  \mathbb{Z} [A^{\pm 1},d_1].$
\end{cor}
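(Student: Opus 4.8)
The plan is to exhibit $D$ as the $D_v$-member of a virtual skein triple both of whose classical members are almost classical, and then extract the conclusion from the relation in Theorem \ref{syu} combined with the integrality statement of Proposition \ref{1}. Let $\tilde D$ be the almost classical virtual link diagram from which $D$ is obtained by replacing a single classical crossing $p$ with a virtual crossing. According to the sign of $p$ in $\tilde D$, I would set $D_+ = \tilde D$ when $p$ is positive, or $D_- = \tilde D$ when $p$ is negative; in each case the remaining classical diagram is taken to be the one obtained from $\tilde D$ by a crossing change at $p$, and I put $D_v = D$. With these choices $(D_+, D_-, D_v)$ is a virtual skein triple with $D_v = D$.

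The step I expect to be the main obstacle is verifying that \emph{both} $D_+$ and $D_-$ are almost classical, since Theorem \ref{syu} requires this of both classical members while the hypothesis supplies only the single almost classical diagram $\tilde D$. The key observation is that a crossing change preserves the existence of an Alexander numbering: such a numbering is an assignment to the semi-arcs of $D$ constrained only by the orientations of the four strands at each classical crossing, as in Fig.\ref{Alexanderc}, and carries no dependence on the over/under data. A crossing change at $p$ leaves the set of semi-arcs, their orientations, and therefore the Alexander condition at every classical crossing unchanged, so the Alexander numbering of $\tilde D$ remains an Alexander numbering of the crossing-changed diagram. Hence both $D_+$ and $D_-$ admit an Alexander numbering and are almost classical, and Theorem \ref{syu} applies to the triple $(D_+, D_-, D_v)$.

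It then remains to combine the two ingredients. By Proposition \ref{1} we have $X_{D_+}, X_{D_-} \in \mathbb{Z}[A^{\pm 1}]$, so the left-hand side of the relation in Theorem \ref{syu} can be rewritten as $(A^6 X_{D_+} - A^{-6}X_{D_-}) + (X_{D_-} - X_{D_+})\,d_1$, which lies in $\mathbb{Z}[A^{\pm 1}]\oplus \mathbb{Z}[A^{\pm 1}]\,d_1$. Consequently $(A^6 - A^{-6})X_{D}$ lies in $\mathbb{Z}[A^{\pm 1}]\oplus \mathbb{Z}[A^{\pm 1}]\,d_1$. On the other hand, $X_{D} \in \mathbb{Z}[A^{\pm 1}, d_1, d_2, \dots]$ by its very definition, so I would write $X_{D} = \sum_{\mathbf{k}} c_{\mathbf{k}}(A)\, d_1^{k_1}d_2^{k_2}\cdots$ with each $c_{\mathbf{k}}(A) \in \mathbb{Z}[A^{\pm 1}]$ and compare coefficients of the monomials in the $d_i$. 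For every monomial other than $1$ and $d_1$ this forces $(A^6 - A^{-6})\,c_{\mathbf{k}}(A) = 0$; since $\mathbb{Z}[A^{\pm 1}]$ is an integral domain and $A^6 - A^{-6} \neq 0$, each such $c_{\mathbf{k}}(A)$ vanishes. What survives is $X_{D} = c_{0}(A) + c_{1}(A)\,d_1 \in \mathbb{Z}[A^{\pm 1}, d_1]$, which is the desired conclusion. The only genuine work lies in the almost-classicality of the crossing-changed diagram; the final coefficient comparison is formal once one records that $X_D$ already lives in the full polynomial ring.
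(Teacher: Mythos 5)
Your proof is correct and follows exactly the route the paper intends: the paper derives this corollary directly from Theorem \ref{syu} and Proposition \ref{1} with no further argument. You simply make explicit the details the paper leaves implicit, namely that a crossing change preserves the existence of an Alexander numbering (so both classical members of the triple satisfy the hypothesis of Theorem \ref{syu}) and the final coefficient comparison using that $A^6-A^{-6}$ is a non-zero-divisor in $\mathbb{Z}[A^{\pm 1}]$.
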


For a polynomial $g \in  \mathbb{Z} [A^{\pm 1},d_1,d_2 \cdots]$, 
Exp($g$) is the set of integers appearing as exponents of $A$ in the term without $d_i$ in $g$.
Exp($g|d_i)$ is the set of integers appearing as exponent of $A$ in the term with $d_i$ in $g$ (i $\in$ $\mathbb{Z}$).

\begin{thm}\label{int}(N. Kamada\cite{cc})\\
Let $D$ be a checkerboard colorable $n$ component virtual link diagram. Then we have the following fr the $f$-polynomial of $D$, $f_D(A)$. Then\\
\begin{equation*}
\rm{Exp}(\mathit{f_D(A)}) \subset
\left\{ \begin{array}{ll}
4 \mathbb{Z}       \hspace{29pt}(n:odd)\\
4 \mathbb{Z} + 2  \hspace{10pt} (n:even)
\end{array}
\right. 
\end{equation*}
\end{thm}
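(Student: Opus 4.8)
The plan is to work directly from the Kauffman state sum for the $f$-polynomial and to track the exponent of $A$ modulo $4$. Writing $\delta = -A^2-A^{-2}$, we have $f_D(A)=(-A^3)^{-w(D)}\langle D\rangle$ with $\langle D\rangle=\sum_\sigma A^{\natural\sigma}\,\delta^{\,\sharp\sigma-1}$, the sum running over the states $\sigma$ obtained by A/B-splicing every classical crossing (virtual crossings are left untouched and only affect the number of loops $\sharp\sigma$). Every monomial of $\delta^{\,\sharp\sigma-1}$ has $A$-exponent $\equiv 2(\sharp\sigma-1)\pmod 4$, so the exponent of each monomial of $\langle D\rangle$ is $\equiv \natural\sigma+2(\sharp\sigma-1)\equiv 2\bigl(a(\sigma)+\sharp\sigma\bigr)-c-2\pmod 4$, where $a(\sigma)$ is the number of A-splices and $c$ the number of classical crossings (so $\natural\sigma=a(\sigma)-b(\sigma)=2a(\sigma)-c$). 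Hence the whole problem reduces to showing that the parity of $a(\sigma)+\sharp\sigma$ is the same for every state $\sigma$, and then to identifying the resulting residue.

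For the first (and main) step I would pass between states one crossing at a time. Switching the splice at a single crossing changes $a(\sigma)$ by $1$, so it suffices to show that it always changes $\sharp\sigma$ by an odd number, i.e.\ by $\pm 1$. Locally the four strand-ends at the crossing are joined, through the rest of the state, by one of three perfect matchings; the switch changes $\sharp\sigma$ by $\pm 1$ for the two ``planar'' matchings but by $0$ for the remaining ``diagonal'' one. For classical diagrams planarity forbids the diagonal matching, which is why the statement is automatic there; for genuinely virtual diagrams it can occur — for the virtual trefoil one computes $f=A^{-4}+A^{-6}-A^{-10}$, whose exponents realise both residues $0$ and $2$, consistently with the fact that the virtual trefoil is not checkerboard colorable. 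The role of the hypothesis is exactly to exclude this: using a $\mathbb{Z}_2$-Alexander numbering and following the labels of the semi-arcs along the arcs of a state loop, the diagonal matching is seen to force an inconsistency of the numbering, so around every classical crossing the two strands of every state sit in one of the two admissible configurations (this is the local picture recorded in Fig.\ref{dproof}(a),(b); cf.\ the proof of Theorem 6 and Figure 9 in \cite{cc}). Consequently each splice switch changes $\sharp\sigma$ by $\pm1$, the parity of $a(\sigma)+\sharp\sigma$ is a state-invariant, and every monomial of $f_D$ lies in a single residue class modulo $4$.

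It then remains to evaluate that class, which I would do on the oriented (Seifert) resolution $\sigma_0$. There $a(\sigma_0)$ equals the number $p$ of positive crossings and $\sharp\sigma_0=s$ is the number of Seifert loops, so $\natural\sigma_0=w(D)$; combining this with the normalisation $(-A^3)^{-w(D)}$ (which shifts exponents by $-3w\equiv w\pmod 4$, and note $2w\equiv 2c\pmod 4$) shows that every exponent of $f_D$ is $\equiv 2(s+c)-2\pmod 4$. Finally, the oriented resolution assembles an abstract orientable surface from $s$ disks and $c$ bands whose boundary is the underlying $n$-component link, so $s-c=\chi\equiv n\pmod 2$, i.e.\ $s+c\equiv n\pmod 2$. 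Therefore every exponent of $f_D$ is $\equiv 2(n-1)\pmod 4$, which equals $0$ when $n$ is odd and $2$ when $n$ is even, as claimed.

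The genuinely hard part is the parity step of the second paragraph: the evaluation and the Euler-characteristic count are routine, but the equality of $a(\sigma)+\sharp\sigma \bmod 2$ across all states is simply false without colorability, and making precise how the $\mathbb{Z}_2$-Alexander numbering rules out the diagonal local matching — equivalently, establishing the state-loop normal forms of Fig.\ref{dproof}(a),(b) — is where the real content lies.
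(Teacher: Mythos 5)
First, a point of comparison: the paper does not prove this statement at all --- it is imported from \cite{cc} and used only as an input to Corollary \ref{cort} --- so there is no internal proof to measure yours against. Judged on its own, your outline is the standard state-sum argument, and its routine parts are all correct: the reduction of the mod-$4$ claim to the constancy of $a(\sigma)+\sharp\sigma \pmod 2$ over all states, the evaluation on the oriented resolution, the step $2w\equiv 2c \pmod 4$, and the Euler-characteristic identification $s+c\equiv n \pmod 2$ all check out, as does your virtual-trefoil sanity check.

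The gap is exactly where you say the content lies, and the mechanism you propose for closing it does not work as stated. You claim that, given a $\mathbb{Z}_2$-Alexander numbering, ``following the labels of the semi-arcs along the arcs of a state loop, the diagonal matching is seen to force an inconsistency of the numbering.'' But the numbering is a globally consistent assignment to semi-arcs fixed in advance, so tracing it along any arc of any state can never produce an inconsistency. Concretely, around a classical crossing the two germs joined by the orientation-respecting splice carry equal $\mathbb{Z}_2$-labels and the two joined by the disoriented splice carry labels differing by $1$; following a state loop, the label simply jumps at each disoriented corner and automatically closes up. The diagonal external matching at $p$ only forces each of the two external arcs to contain an odd number of disoriented corners, which is not a contradiction. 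A correct argument is homological: realize $D$ on its supporting closed orientable surface $\hat{\Sigma}$, on which the loops of any state are disjointly embedded (virtual crossings disappear there). If the external matching at $p$ were diagonal, the partial state with $p$ left unspliced would contain two loops $\gamma_a,\gamma_b$ meeting transversally exactly at $p$ and disjoint from all the remaining loops $\gamma_i$; pairing the relation $[\gamma_a]+[\gamma_b]+\sum_i[\gamma_i]=[D]=0$ in $H_1(\hat{\Sigma};\mathbb{Z}_2)$ --- and this vanishing is precisely checkerboard colorability --- against $[\gamma_a]$ gives $0+1+0=0$, a contradiction, using $[\gamma_a]\cdot[\gamma_a]=0$ on an orientable surface. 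This (or the equivalent region-coloring analysis of Figure 9 of \cite{cc}, which is also what the paper invokes to justify Fig.\ref{dproof}(a),(b) in its main theorem) is the missing step; with it supplied, your proof is complete.
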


\begin{cor}\label{cort}(c.f. S. Satoh, Y. Tomiyama \cite{prop})\\
Let $D$ be an $n$ component virtual link diagram obtained from an almost classical virtual link diagram by virtualizing a classical crossing. Then we have
\begin{align*}
& \rm{Exp}(\mathit{X_D}) \subset
\left\{ \begin{array}{ll}
4 \mathbb{Z}       \hspace{29pt}(n:odd)\\
4 \mathbb{Z} + 2  \hspace{10pt} (n:even)
\end{array}
\right. \\
& \rm{Exp}(\mathit{X_D|d_1}) \subset
\left\{ \begin{array}{ll}
4 \mathbb{Z} + 2 \hspace{10pt}(n:odd)\\
4 \mathbb{Z}      \hspace{29pt} (n:even)
\end{array}
\right.
\end{align*}
\end{cor}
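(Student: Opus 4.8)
The plan is to realize $D$ as the virtual resolution $D_v$ of a virtual skein triple and then extract the exponent information from Theorem \ref{syu}. First I would observe that a crossing change never alters an Alexander numbering, since the numbering depends only on the orientations of the four semi-arcs at a crossing and not on the over/under data. Hence, if $D$ is obtained from an almost classical diagram $D'$ by virtualizing a classical crossing $p$, I can build a virtual skein triple $(D_+,D_-,D_v)$ with $D_v=D$ in which $D_+$ is $D'$ when $p$ is positive and is the crossing change of $D'$ at $p$ when $p$ is negative, with $D_-$ the remaining crossing change; in either case both $D_+$ and $D_-$ are almost classical. Since replacing a classical crossing by a virtual one and performing a crossing change both preserve the connectivity of the underlying immersed loops, $D_+$ and $D_-$ also have exactly $n$ components.

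Next I would apply Theorem \ref{syu}. By Proposition \ref{1} write $X_{D_+}=P$ and $X_{D_-}=Q$ with $P,Q\in\mathbb{Z}[A^{\pm1}]$, and by Corollary \ref{coro} write $X_D=X_0+X_1 d_1$ with $X_0,X_1\in\mathbb{Z}[A^{\pm1}]$, so that $\mathrm{Exp}(X_D)=\mathrm{Exp}(X_0)$ and $\mathrm{Exp}(X_D\mid d_1)=\mathrm{Exp}(X_1)$. Substituting these into $(A^6-d_1)X_{D_+}+(-A^{-6}+d_1)X_{D_-}=(A^6-A^{-6})X_{D_v}$ and comparing the coefficients of $d_1^{0}$ and $d_1^{1}$ gives the two identities $(A^6-A^{-6})X_0=A^6P-A^{-6}Q$ and $(A^6-A^{-6})X_1=Q-P$. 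Moreover, for an almost classical diagram the empty cut system assigns index $0$ to every loop of every state, so no variable $d_i$ occurs and $X_{D_\pm}$ coincides with the $f$-polynomial; thus $P=f_{D_+}$ and $Q=f_{D_-}$. As $D_+$ and $D_-$ are checkerboard colorable with $n$ components, Theorem \ref{int} yields $\mathrm{Exp}(P),\mathrm{Exp}(Q)\subset 4\mathbb{Z}$ for $n$ odd and $\subset 4\mathbb{Z}+2$ for $n$ even.

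The crux is a divisibility lemma governing division by $A^6-A^{-6}$: if $g\in\mathbb{Z}[A^{\pm1}]$ and every exponent of $(A^6-A^{-6})g$ lies in a single residue class $c$ modulo $4$, then every exponent of $g$ lies in the class $c+2$ modulo $4$. I would prove this by writing $g=\sum_j g_jA^j$ and noting that the coefficient of $A^m$ in $(A^6-A^{-6})g$ is $g_{m-6}-g_{m+6}$; the hypothesis forces $g_j=g_{j+12}$ whenever $j\not\equiv c+2 \pmod 4$. Since $12\equiv 0 \pmod 4$, this relation propagates along the progression $j,j+12,j+24,\dots$, and finiteness of the support of $g$ then forces $g_j=0$ for every such $j$. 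This is the step I expect to demand the most care, as it is where the exponent bookkeeping must be kept exact.

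Finally I would feed the two identities into this lemma, using $6\equiv -6\equiv 2 \pmod 4$. When $n$ is odd, $A^6P-A^{-6}Q$ has all exponents $\equiv 2$, so $\mathrm{Exp}(X_0)\subset 4\mathbb{Z}$, while $Q-P$ has all exponents $\equiv 0$, so $\mathrm{Exp}(X_1)\subset 4\mathbb{Z}+2$. When $n$ is even the same computation is shifted, giving $\mathrm{Exp}(X_0)\subset 4\mathbb{Z}+2$ and $\mathrm{Exp}(X_1)\subset 4\mathbb{Z}$. Since $\mathrm{Exp}(X_D)=\mathrm{Exp}(X_0)$ and $\mathrm{Exp}(X_D\mid d_1)=\mathrm{Exp}(X_1)$, this is precisely the asserted statement.
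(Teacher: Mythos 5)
Your proposal is correct and follows essentially the same route as the paper: realize $D$ as $D_v$ in a virtual skein triple with $D_\pm$ almost classical, apply Theorem \ref{syu}, identify $X_{D_\pm}$ with $f$-polynomials via Proposition \ref{1}, and invoke Theorem \ref{int}. Your explicit divisibility lemma for $A^6-A^{-6}$ (and your handling of the case where the virtualized crossing is negative) supplies detail that the paper compresses into ``Thus we have the result,'' but it is the same argument.
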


\begin{proof}
Let $D$ be an $n$ component virtual link diagram obtained from an almost classical virtual link diagram $D_+$ by virtualizing a positive classical crossing $p$. Suppose that $D_-$ is an almost virtual link diagram obtained from $D_+$ by replacing a positive crossing $p$ with a negative crossing. Then we have 
\begin{equation*}
(A^6-d_1)X_{D_+}+(-A^{-6}+d_1)X_{D_-}=(A^6-A^{-6})X_{D_v}
\end{equation*}
from Theorem \ref{syu}.
By substituting 1 for $d_i$, the multivariable polynomial invariant coincide with $f$-polynomial.
From Proposition \ref{1}, $X_{D_+} \in \mathbb{Z} [A^{\pm 1}]$ and $X_{D_-} \in \mathbb{Z} [A^{\pm 1}]$. Therefore $X_{D_+}$ and $X_{D_-}$ coincide with $f$-polynomial of $D_+$ and $D_-$.
Since $D_+$ and $D_-$ are almost classical, then they are checkerboard colorable.
Then we have 
\begin{equation*}
\rm{Exp}(\mathit{X_{D_+}}) \subset
\left\{ \begin{array}{ll}
4 \mathbb{Z}       \hspace{29pt}(n:odd)\\
4 \mathbb{Z} + 2  \hspace{10pt} (n:even)
\end{array}
\right. 
\end{equation*}
from Theorem \ref{int}. Thus we have the result.
\end{proof}

\section*{Acknowledgement}

I would like to express my appreciation to Professer Naoko Kamada.

\begin{bibdiv}
\begin{biblist}

\bib{arrow}{article}{
      title={Virtual Crossing Number and the Arrow Polynomial}, 
      author={H. A. Dye and L. H. Kauffman},
      year={2009},
      eprint={0810.3858},
      archivePrefix={arXiv},
      primaryClass={math.GT}
}

\bib{cc}{article}{
  title={On the Jones polynomials of checkerboard colorable virtual links},
  author={N. Kamada},
  journal={Osaka Journal of Mathematics},
  volume={39},
  number={2},
  pages={325--333},
  year={2002},
  publisher={Osaka University and Osaka City University, Departments of Mathematics}
}

\bib{relation}{article}{
  title={A relation of Kauffman's f-polynomials of virtual links},
  author={N. Kamada},
  journal={Topology and its Applications},
  volume={146},
  pages={123--132},
  year={2005},
  publisher={Elsevier}
}

\bib{cp}{article}{
   title={Cyclic coverings of virtual link diagrams}, 
      author={N. Kamada},
      year={2019},
   journal={International Journal of Mathematics},
  volume={30},
  number={14},
  pages={1950072},
  publisher={World Scientific}
}

\bib{jones}{article}{
title = {A multivariable polynomial invariant of virtual links and cut systems},
journal = {Topology and its Applications},
volume = {301},
pages = {107518},
year = {2021},
note = {Special issue for the proceedings of the Third PPICTA},
issn = {0166-8641},
doi = {https://doi.org/10.1016/j.topol.2020.107518},
author = {N. Kamada}}

\bib{skein}{article}{
author = {N. Kamada and S. Nakabo and S. Satoh}
year = {2002},
month = {06},
pages = {},
title = {A virtualized skein relation for Jones polynomials},
volume = {46},
journal = {Illinois Journal of Mathematics - ILL J MATH},
doi = {10.1215/ijm/1258136203}
}

\bib{kauffman}{article}{
author = {L. H. Kauffman}
year = {1999}
title = {Virtual knot theory}
jounal = {Eur.J.Comb}
pages = {663-690}
number = {20}
}

\bib{miyazawa}{article}{
author={Y. Miyazawa},
title={A multi-variable polynomial invariant for virtual knots and links},
journal={Journal of Knot Theory and its Ramifications},
year={2008},
volume={17},
number={11},
pages={1311-1326},
doi={10.1142/S0218216508006658},
note={cited By 34},
source={Scopus},
}

\bib{almost}{article}{
author = {T. Nakamura and Y. Nakanishi and S. Satoh and Y. Tomiyama},
year = {2012},
month = {09},
pages = {},
title = {Twin groups of virtual 2-bridge knots and almost classical knots},
volume = {10},
journal = {Journal of Knot Theory and its Ramifications},
doi = {10.1142/S0218216512500952}
}

\bib{prop}{article}{
author = {S. Satoh and Y. Tomiyama},
year = {2012},
month = {01},
pages = {},
title = {On the crossing numbers of a virtual knot},
volume = {140},
journal = {Proceedings of the American Mathematical Society},
doi = {10.1090/S0002-9939-2011-10917-1}
}

\end{biblist}
\end{bibdiv}

\end{document}